\newtheorem{theorem}{Theorem}[section] 
\newtheorem{lemma}[theorem]{Lemma}     
\newtheorem{proposition}[theorem]{Proposition}
\newtheorem{conjecture}{Conjecture}  
\newtheorem{definition}{Definition}
\newtheorem{remark}{Remark}
\newtheorem{example}{Example}
\title{Kato's local epsilon conjecture: $l \neq p$ case} 
\author{Mahesh Kakde \\
mahesh.kakde@kcl.ac.uk \\
King's College London}
\newcommand{\ilim}[1]{%
	\displaystyle{%
	\lim_{\genfrac{}{}{0pt}{}{\longleftarrow}{\scriptstyle #1}} }\;}
\begin{document}
\maketitle
\tableofcontents

\begin{abstract} Let $l$ and $p$ be two distinct primes. Let $K$ be a local field of characteristic 0 and residue characteristic $l$. In this paper we prove existence of local $\epsilon_0$-constants for representations of $Gal(\overline{K}/K)$ over Iwasawa algebras of $p$-adic Lie groups. Existence of these $\epsilon_0$-constants was conjectured by Kato (for commutative Iwasawa algebras) and Fukaya-Kato (in general).
\end{abstract}

\section{Introduction} In the last decade a nice conjectural picture of noncommutative Iwasawa theory has emerged thanks to Burns-Flach \cite{BurnsFlach:2001}, Huber-Kings \cite{HuberKings:2002}, Coates. \emph{et. al.} \cite{CFKSV:2005}, Fukaya-Kato \cite{FukayaKato:2006}, among others. A key part of Iwasawa theory is the (conjectural) existence of $p$-adic $L$-function for a motive and a $p$-adic Lie extension of a number field, both satisfying certain conditions (see section 4 of Fukaya-Kato \cite{FukayaKato:2006}). The defining property of the $p$-adic $L$-function is that it interpolates certain values of complex $L$-functions coming from the motive and Artin representations of the $p$-adic Lie extension. $p$-adic $L$-functions are expected to satisfy a functional equation analogous to (conjectural) functional equations of complex $L$-functions. The so called epsilon constant appears in the functional equation of complex $L$-functions. This epsilon constant is a product of local epsilon constants; $l$-adic epsilon constant for each prime $l$ of the number field over which we are defining the $L$-function. These local epsilon constants are defined for one dimensional representations in Tate's thesis \cite{Tate:1950}. They are constructed in general in an unpublished manuscript of Langlands \cite{Langlands:1970} and later, by a simpler method, in Deligne \cite{Deligne:1973}. Kato \cite{Kato:1993b} and Fukaya-Kato \cite{FukayaKato:2006}  conjecture existence of $p$-adic analogues of these epsilon constants. We call this conjecture `Kato's local $\epsilon$-conjecture'. These $p$-adic analogues of epsilon constants should appear in the conjectural functional equation of $p$-adic $L$-functions. Their defining property is that they interpolate the epsilon constants of Deligne-Langlands attached to motives and Artin representations of $p$-adic Lie extensions. There are two cases of this conjecture: we have the $l$-adic local epsilon factors and a $p$-adic Lie extension, so the two cases are (by abuse of language) $l=p$ and $l \neq p$.

The case $l=p$ seems to be very deep (see Kato \cite{Kato:1993b} and a survey of it, among other results, in Venjakob \cite{Venjakob:2012}) and, as far as the author is aware, very few results are known in this direction. In this paper we prove, up to uniqueness, Kato's local $\epsilon$-conjecture in the $l \neq p$ case. We give a sufficient algebraic condition to ensure uniqueness. Kato's local $\epsilon$-conjecture is known in the commutative case by Yasuda \cite{Yasuda:2009}. In a recent preprint Yasuda \cite{Yasuda:2012} proves the noncommutative version by a method completely different than ours. We remark that we do not use any of the results proven by Yasuda, including the commutative case, in our proof. 

Our strategy for proving the conjecture is as follows: Huber-Kings \cite{HuberKings:2002} (see also Fukaya-Kato \cite{FukayaKato:2006} 2.3.5) made a remarkable observation which roughly says that ``to prove the Equivariant Tamagawa Number Conjecture (ETNC) for arbitrary motives and arbitrary extensions it is enough to prove it for Tate motives and arbitrary extensions". While this strategy has not yet been applied to ETNC except to prove uniqueness (Fukaya-Kato \cite{FukayaKato:2006} 2.3.5), we use it here to prove Kato's local $\epsilon$-conjecture $l \neq p$ case. Therefore we first prove the existence of epsilon constants for Tate motives and then in general by a bootstrapping process. The existence of epsilon constants for Tate motives and finite extensions is equivalent to showing that the Galois Gauss sums of Martinet \cite{Martinet:1977} belong to the image of ``determinant" map from a certain $K_1$ group. This is famously proven for Tame extensions by Taylor \cite{Taylor:1978} during his proof of Fr\"{o}hlich's conjecture. It is subsequently generalised in Holland-Wilson \cite{HollandWilson:1993} to arbitrary extensions using the results of Deligne-Henniart \cite{DeligneHenniart:1981}. We do not use the results of Taylor, Holland-Wilson or Deligne-Henniart in our proof. Our proof has some similarities with Taylor's proof, for instance, both proofs use integral logarithm developed by Oliver and Taylor (in fact we use a more general construction of integral logarithm from Chinburg-Pappas-Taylor \cite{ChinburgPappasTaylor:2012}) and both proofs have certain congruences between Gauss sums. However, the congruences in this paper look different from that in Taylor \cite{Taylor:1978} and we have made no attempt to compare the two.   

Our construction of the $p$-adic epsilon factors is philosophically close to the construction of Langlands but admittedly much easier. Let us explain this briefly. Langlands starts with epsilon constants defined by Tate for one dimensional representation. It is easy to construct a candidate for epsilon constants for arbitrary finite dimensional representations using Brauer's induction theorem. However, it is a very difficult task to prove well-defined-ness of this definition of epsilon factors (see the comment at the end of section \ref{sectionrecall}). Nonetheless, Langlands shows this in his manuscript Langlands \cite{Langlands:1970} by determining presentation of the kernel of ``Brauer's induction map" and then using it to show that the epsilon constants on elements in the kernel are 1. In our proof we use an explicit expression for the $p$-adic epsilon constants for finite abelian extensions. We then prove that the $p$-adic epsilon constant for finite non-abelian extensions exists if the $p$-adic epsilon constants for abelian subextensions satisfy certain compatibilities (or congruences). The precise shape of these compatibilities is determined by a theorem which is in a sense dual to Brauer's induction theorem (see the proof of theorem \ref{theoremk1} in section \ref{sectionappendix}, especially the map $\beta_{G,J}$ which is dual to the induction map). 

The paper is organised as follows. In section \ref{sectionrecall} we briefly review the properties of Deligne-Langlands local epsilon factors after setting up notations for the paper. In section \ref{sectionstatement} we recall the statement of the conjecture. In section \ref{sectionspecialcase}, which is the heart of the paper, we prove the existence of $p$-adic epsilon factors for Tate motives. This uses some algebraic results such as description $K_1$-groups of certain groups rings. These results are collected in the appendix, section \ref{sectionappendix}. Sections \ref{sectiongroupring} and \ref{sectiongeneral} finish the proof of our main theorem \ref{maintheorem}.  

I take this opportunity to thank John Coates for his constant encouragement. I thank David Burns for motivation and stimulating discussions. Without his insistence I would not have taken up this problem.  I also thank Seidai Yasuda for providing a preprint of his paper.

\section{Review of Deligne-Langlands local constants} \label{sectionrecall} In this section we recall basic properties of Deligne-Langlands local constants. We do not recall their definition or construction. Notation from this section is used throughout the paper. Fix a prime number $l$. Let $K$ be a finite extension of $\mathbb{Q}_l$. We denote by $O_K$ the ring of integers of $K$. Let $\pi$ be a uniformiser of $O_K$ and $k=O_K/(\pi)$. Let $|k| = l^d$. Let $\parallel \cdot \parallel$ be the norm on $K$ normalised so that $\parallel \pi \parallel = l^{-d}$.

Fix an algebraic closure $\overline{K}$ of $K$. Let $K^{ur}$ be the maximal unramified extension of $K$ in $\overline{K}$. We denote by $\overline{\mathbb{F}}_l$ the residue field that is an algebraic closure of $\mathbb{F}_l$. Recall that the Weil group $W(\overline{K}/K)$ is defined by 
\[
W(\overline{K}/K) = \{ \sigma \in Gal(\overline{K}/K) : v(\sigma) \in \mathbb{Z}\},
\]
where $v$ is the composition $Gal(\overline{K}/K) \rightarrow Gal(\overline{\mathbb{F}}_l/\mathbb{F}_l) \cong \hat{\mathbb{Z}} = \ilim{n} \mathbb{Z}/n\mathbb{Z}$. Here the isomorphism sends the arithmetic Frobenius $\varphi_l$ in $Gal(\overline{\mathbb{F}}_l/\mathbb{F}_l)$ (the map $x \mapsto x^l$) to $1 \in \mathbb{Z}$. The group $W(\overline{K}/K)$ is regarded as a topological group in the way that the subgroup $I:=Ker(v) = Gal(\overline{K}/K^{ur})$ has the usual topology and is open in $W(\overline{K}/K)$. We denote the wild inertia group by $P$. Consider the following commutative diagram.
\[
\xymatrix{P \ar@{^{(}->}[r] \ar[d] & I \ar@{^{(}->}[r] \ar[d] & W(\overline{K}/K) \ar[d]^{\varphi \mapsto \pi^{-1}} \ar[r] & W(\overline{k}/k) = \mathbb{Z} \ar[d]^{-1} \\
1+(\pi) \ar@{^{(}->}[r] & O^{\times} \ar@{^{(}->}[r] & K^{\times} \ar[r]_{val} & \mathbb{Z}}
\]
The third vertical arrow induces an isomorphism between $W(\overline{K}/K)^{ab}$ and $K^{\times}$. We denote the inverse of this map, the Artin map, by 
\[
rec_K:= rec_{\overline{K}/K} : K^{\times} \xrightarrow{\cong} W(\overline{K}/K)^{ab}.
\]
If $L$ is a Galois extension of $K$, then we put
\[
rec_{L/K} : K^{\times} \rightarrow Gal(L/K)^{ab}
\]
for the Artin map. If the extension $L$ is clear from the context we simply write $rec$ for $rec_{L/K}$. 

Let $E$ be a field containing $\mathbb{Q}_l$ and all $l$-power roots of 1. We endow it with discrete topology. For a topological group $G$, we define $R_E(G)$ to be the ring of continuous finite dimensional virtual representations of $G$ with values in $E$. We tacitly use the term representation to mean actual representation rather than a virtual representation. There is a ring homomorphism 
\[
dim: R_E(G) \rightarrow \mathbb{Z}.
\]
We denote $R_{\mathbb{C}_p}(G)$ simply by $R(G)$. Let $dx$ be a $E$-valued Haar measure on $K$ and let $\psi$ be an additive character of $K$ i.e. a homomorphism $\psi: K \rightarrow E^{\times}$. The Haar measure $dx$ satisfies 
\[
d(ax) = \parallel a \parallel dx \hspace{1cm} \text{ for any } a \in K^{\times}
\]
Define $n(\psi)$ to be the smallest integer $n$ such that $\psi|_{(\pi^{-n})} = 1$. If $\chi$ is a one dimensional character of $W(\overline{K}/K)$ i.e. a homomorphism $\chi : W(\overline{K}/K) \rightarrow E^{\times}$, then we denote the conductor of $\chi \circ rec_K$ by $a(\chi)$ i.e. the smallest non-negative integer $a$ such that $1+(\pi^a)$ is contained in the kernel of $\chi \circ rec_K$. Then for any tuple $(K,V,\psi, dx)$ with $V \in R_E(W(\overline{K}/K))$ Deligne \cite{Deligne:1973} defines 
\[
\epsilon_{0,E}(K,V,\psi,dx) \in E^{\times} \quad \text{ such that }
\]
\begin{enumerate}
\item If 
\[
0 \rightarrow V' \rightarrow V \rightarrow V'' \rightarrow 0
\]
is a short exact sequence of virtual representations in $R_E(W(\overline{K}/K))$, then 
\[
\epsilon_{0,E}(K,V,\psi,dx) = \epsilon_{0,E}(K,V',\psi,dx)\epsilon_{0,E}(K,V'',\psi,dx)
\]
\item For any $a \in K^{\times}$
\[
\epsilon_{0,E}(K,V,\psi_a, dx) = det(V)(rec_K(a))\parallel a \parallel^{-dim(V)} \epsilon_{0,E}(K,V,\psi, dx).
\]
and for any $a \in E^{\times}$
\[
\epsilon_{0,E}(K,V,\psi, adx)  = a^{dim(V)} \epsilon_{0,E}(K,V,\psi,dx),
\]
where $\psi_a(x) := \psi(ax)$. \\
\item We have a functional equation
\[
\epsilon_{0,E}(K,V,\psi,dx) \epsilon_{0,E}(K, V^*(1), \psi_{-1},dx) = 1.
\]
\item If $\int_{O_K}dx =1$ and $ker(\psi)=O_K$, then for an unramified representation $V$ of dimension one 
\[
\epsilon_{0,E}(K,V,\psi, dx) = (-\chi(\varphi^{-1}))^{-1} = -\chi(\varphi),
\]
where $\chi$ gives the action of $W(\overline{K}/K)$ on $V$.
\item Assume that $V$ is a one dimensional representation of $W(\overline{K}/K)$ and the action of $W(\overline{K}/K)$ on $V$ is given by $\chi$. Assume that $\chi$ is ramified, then 
\begin{align*}
\epsilon_{0,E}(K,V,\psi,dx) & = \int_{K^{\times}} \chi^{-1}(x)\psi(x)dx := \sum_{n} \int_{v(x)=n} \chi^{-1}(x)\psi(x)dx \\
& = \int_{c^{-1}O_K^{\times}} \chi^{-1}(x)\psi(x)dx,
\end{align*}
where $c \in K^{\times}$ is any element such that $v(c) = n(\psi) + a(\chi)$. \\
\item Let $L$ be a finite extension of $K$. Put $\psi_L = \psi \circ Tr_{L/K}$ and $dy$ for a Haar measure on $L$. Let $V$ be a virtual representations of $W(\overline{K}/L)$ of dimension 0. Then 
\[
\epsilon_{0,E}(L,V,\psi_L, dy) = \epsilon_{0,E}(K,Ind_{L/K}V, \psi,dx).
\]
More generally if dimension of $V$ is not 0, then Deligne proves existence of $\lambda(L/K, \psi, dx, dy) \in E^{\times}$ such that 
\[
\epsilon_{0,E}(L,V,\psi_L, dy) = \lambda(L/K, \psi \circ Tr_{L/K}, dx, dy)^{-dim(V)} \epsilon_{0,E}(K,Ind_{L/K}V, \psi,dx).
\]
The $\lambda$-invariant is independent of $V$ and depends only on the extension $L/K$, $\psi$ and the choice of Haar measures $dx$ and $dy$ on $K$ and $L$ respectively. Therefore we can compute them by
\[
\lambda(L/K, \psi, dx, dy) = \frac{\epsilon_{0,E}(K, Ind_{L/K}(1), \psi, dx)}{\epsilon_{0,E}(L, 1, \psi \circ Tr_{L/K}, dy)},
\]
where $1$ is the trivial representation of $W(\overline{K}/L)$. This will be useful for us especially when $L/K$ is a finite abelian extension. The explicit formulae for epsilon constants given below show that $\lambda(L/K, \psi, dx,dy)=(-1)^{[L:K]-1}$, when $L/K$ is a finite unramified abelian extension. 
\item If $V$ is an element of $R_E(W(\overline{K}/K))$ and $\chi$ is an unramified one dimensional character of $W(\overline{K}/K)$, then 
\[
\epsilon_{0, E}(K, V\chi, \psi, dx) = \chi(\pi^{sw(V)+dim(V)(n(\psi)+1)}) \epsilon_{0, E}(K, V, \psi, dx),
\] 
where $sw(V)$ is the Swan conductor of $V$ which we do not define here. We only remark that in the case of one dimensional representation $V$ the Swan conductor is 0 for unramified $V$ and it is $a(V)-1$ if $V$ is not unramified. 
\end{enumerate}
We denote $\widehat{\mathbb{Z}_p^{ur}}$, $p$-adic completion of the maximal unramified extension of $\mathbb{Z}_p$, by $J$. From now on we choose and fix a Haar measure $dx_K$ on $K$ such that $\int_{O_K}dx_K =1$. If there is no scpoe for confusion we write $dx$ for $dx_K$. If $V$ is of dimension 0, then (2) implies that $\epsilon_{0,E}$ does not depend on $dx$. In this case we denote the epsilon constant by $\epsilon_{0,E}(K,V, \psi)$. For a character $\chi$ of $W(\overline{K}/K)$ of finite order, the epsilon constant can be computed explicitly as 
\begin{itemize}
\item[(i)] $\epsilon_{0,E}(K,\chi, \psi, dx) = -\chi(\pi^{n(\psi)+1})l^{dn(\psi)}$ if $\chi$ is unramified.
\item[(ii)] $\epsilon_{0,E}(K,\chi, \psi, dx) = l^{dn(\psi)}\sum_{u \in O_K^{\times} (\text{mod } (\pi^{a(\chi)}))} \chi^{-1}(rec(uc^{-1})) \psi_K(uc^{-1})$ if $\chi$ is not unramified.
\end{itemize}

These explicit expressions will be very  useful for us later. If $\rho$ is an Artin representation of $W_K$ of dimension $n_{\rho}$, then a theorem of Brauer (see Deligne \cite{Deligne:1973} 1.10) gives
\[
\rho - n_{\rho} \cdot 1 = \sum_{i} n_i Ind_{K_i/K}(\chi_i -1),
\]
for finite extensions $K_i$ of $K$ and one dimensional representations $\chi_i$ of $W_{K_i}$. Then we have
\[
\epsilon(K,\rho, \psi, dx) = \epsilon(K,1, \psi, dx)^{n_{\rho}} \prod_{i} \epsilon(K_1, \chi_i-1, \psi \circ Tr_{K_i/K})^{n_i}
\]
The main result of Langlands \cite{Langlands:1970} and Deligne \cite{Deligne:1973} is to show that this expression is well-defined.

\section{Statement of the main theorem} \label{sectionstatement} Fix a prime $p$ different from $l$. Let $G$ be a $p$-adic Lie group and $O$ be a ring. We put $\Lambda_O(G) := \ilim{U}O[G/U]$ for the Iwasawa algebra of $G$ with coefficients in $O$; here $U$ runs through open normal subgroups of $G$. Let $T$ be any finitely generated projective $\Lambda_O(G)$-module with a continuous action of $Gal(\overline{K}/K)$ (unless stated otherwise all modules and actions are left).

Recall that we have fixed a Haar measure $dx$ on $K$. We consider the triples $(\Lambda, T, \psi)$, where $\Lambda := \Lambda_O(G)$ for some $p$-adic Lie group $G$ and $O$ is a finite extension of $\mathbb{Z}_p$. For a ring $\Lambda$ we denote by $\tilde{\Lambda}$ the ring $J \hat{\otimes}_{\mathbb{Z}_p} \Lambda$.

\begin{conjecture}[(Kato's local epsilon conjecture: $l \neq p$ case)] There exists a unique way to associate an element
\[
\epsilon_{0,\Lambda}(K,T,\psi) \in K_1(\tilde{\Lambda}),
\]
to each triple $(\Lambda, T, \psi)$ as above, satisfying the following conditions:
\begin{itemize}
\item[(i)] For triples $(\Lambda, T, \psi), (\Lambda, T', \psi), (\Lambda, T'', \psi)$ with common $\Lambda, \psi$ and with an exact sequence $0 \rightarrow T' \rightarrow T \rightarrow T'' \rightarrow 0$, we have
\[
\epsilon_{0,\Lambda}(K, T, \psi) = \epsilon_{0,\Lambda}(K, T', \psi) \epsilon_{0, \Lambda}(K, T'', \psi).
\] 
\item[(ii)] Let $(\Lambda, T, \psi), (\Lambda', T', \psi)$ be triples with a common $\psi$, and let $Y$ be a finitely generated projective $\Lambda'$-module endowed with a continuous right action of $\Lambda$ that is compatible with the action of $\Lambda'$. Assume that $T' \cong Y \otimes_{\Lambda} T$. Then the map $Y \otimes_{\Lambda} : K_1(\tilde{\Lambda}) \rightarrow K_1(\tilde{\Lambda'})$ (for a definition of this map see the example after the definition of $K_1$ groups in section \ref{sectionappendix}) sends $\epsilon_{0, \Lambda}(K, T, \psi)$ to $\epsilon_{0, \Lambda'}(K, T', \psi)$. 
\item[(iii)] Let $c \in O_K^{\times}$, then 
\[
\epsilon_{0,\Lambda}(K, T, c\psi) = [T, rec(c)] \epsilon_{0, \Lambda}(K, T, \psi).
\]
\item[(iv)] We have 
\[
\varphi_p(\epsilon_{0,\Lambda}(K, T, \psi)) = [T, rec(p)] \epsilon_{0,\Lambda}(K, T, \psi),
\]
where $\varphi_p$ is an endomorphism of $K_1(\tilde{\Lambda})$ which is Frobenius on $J$ and identity on $\Lambda$. 
\item[(v)] Let $L$ be a finite extension of $\mathbb{Q}_p$, let $V$ be a finite dimensional $L$-vector space over $L$ endowed with a continuous action of $Gal(\overline{K}/K)$ such that the induced action of $W(\overline{K}/K)$ on $V$ is continuous with respect to the discrete topology of $V$, and let $T$ be an $O_L$-lattice of $V$ that is stable under the action of $Gal(\overline{K}/K)$. Let $E$ be any composite field of $L$ and $\mathbb{Q}_p \otimes_{\mathbb{Z}_p} W(\overline{\mathbb{F}}_p)$ over $\mathbb{Q}_p$. Then $\epsilon_{0,E}(K, V, \psi, dx)$ coincides with the image of $\epsilon_{0,O_L}(K, T, \psi)$. 
\item[(vi)] $\epsilon_{0,\Lambda}(K, T, \psi) = l^{dn(\psi)} [T, -\varphi_l^{1+n(\psi)}]$ is the action of $Gal(\overline{K}/K)$ on $T$ is unramified. 
\end{itemize}
\label{conjecture}
\end{conjecture}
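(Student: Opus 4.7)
The plan is to follow the Huber--Kings bootstrapping philosophy advertised in the introduction: first establish the conjecture in the \emph{Tate motive case} (where $T$ is the regular representation $\Lambda_{\mathcal O}(G)$ with trivial $Gal(\overline K/K)$-action, and $G$ is the Galois group of a $p$-adic Lie extension of $K$), and then deduce the general statement by a formal argument using property (ii). Existence of $\epsilon_0$ in the Tate motive case is equivalent to producing, for each finite Galois extension $L/K$ with group $G$ (and then compatibly in the tower), an element $\tau(L/K) \in K_1(J \hat\otimes_{\mathbb Z_p} \mathbb Z_p[G])$ whose image under each character $\chi$ of $G$ recovers the Deligne--Langlands epsilon factor $\epsilon_{0,E}(K, \mathrm{Ind}_{L^{\ker\chi}/K}\chi, \psi, dx)$ (essentially the Galois Gauss sum), together with the functoriality forced by (i)--(vi).

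For the Tate-motive step, first I would reduce to the case where $G$ is a finite $p$-group by Morita / induction and the usual decomposition of $K_1(\mathbb Z_p[G])$ along blocks. For finite abelian $G$, property (v) and the explicit formulas (i) and (ii) at the end of Section~\ref{sectionrecall} pin down $\tau(L/K)$ completely as a product of Gauss sums, which clearly lies in $K_1(J\hat\otimes_{\mathbb Z_p}\mathbb Z_p[G^{ab}])$. For general finite $G$, the strategy is to \emph{define} $\tau(L/K)$ by its collection of projections to abelian subquotients and then use the $K_1$-description proven in the appendix (theorem \ref{theoremk1}, via the map $\beta_{G,J}$ dual to Brauer induction) to assemble these abelian data into a single class in $K_1$. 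Concretely, $\beta_{G,J}$ identifies $K_1$ of $\tilde{\Lambda}_{\mathcal O}(G)$ with a subgroup of a product $\prod_H K_1(\tilde{\Lambda}_{\mathcal O}(H^{ab}))$ (indexed over suitable subgroups $H \le G$) cut out by certain norm/trace congruences, and my element will lie in this subgroup precisely when the abelian epsilon constants satisfy those congruences.

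The key technical step, and the main obstacle, is therefore verifying these congruences: one needs to prove that for any $H \le G$ with abelianisation $H^{ab}$, the abelian epsilon constants attached to the characters of $H^{ab}$ satisfy the integrality / Oliver--Taylor relations that characterise the image of $\beta_{G,J}$. Following Chinburg--Pappas--Taylor \cite{ChinburgPappasTaylor:2012}, I would apply the generalised integral logarithm $L$ to reduce multiplicative congruences between Gauss sums to additive congruences between their logarithms, and then verify the latter by direct manipulation of the explicit sums in (ii) of Section~\ref{sectionrecall}. This is where congruences between Gauss sums different from (though parallel to) Taylor's \cite{Taylor:1978} proof of Fröhlich's conjecture will emerge. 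Properties (iii), (iv), (vi) can then be checked on abelian projections, where they are immediate from the explicit formulas and basic properties of $rec_K$ and $\varphi_l$; property (i) for short exact sequences is tautological given the definition; passage to the $p$-adic Lie case is by taking inverse limits over open normal subgroups, using continuity of $\beta_{G,J}$.

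Finally, for an arbitrary $T$, I would define $\epsilon_{0,\Lambda}(K,T,\psi)$ by writing $T = Y \otimes_{\Lambda_{\mathcal O}(G_L)} \mathcal O[G_L]$ for a sufficiently large finite Galois extension $L/K$ trivialising the action on $T$ (with $G_L = Gal(L/K)$) and a bimodule $Y$, then setting $\epsilon_{0,\Lambda}(K,T,\psi) := Y \otimes_{\Lambda_{\mathcal O}(G_L)} \tau(L/K)$ as dictated by property (ii). The six axioms then follow formally: (ii) is built in; (i) comes from additivity of the tensor construction; (iii), (iv), (vi) descend from the Tate-motive case; and (v) is forced by the interpolation property of $\tau(L/K)$ established in the previous step. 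Uniqueness follows from the fact that the axioms, applied to the universal case $T = \Lambda_{\mathcal O}(G_L)$, determine $\tau(L/K)$ up to the intersection of the kernels of all projections $K_1(\tilde\Lambda_{\mathcal O}(G_L)) \to K_1(E)$ through characters, which is controlled by the algebraic condition stated in the appendix.
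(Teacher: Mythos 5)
Your outline captures the broad strategy of the paper (abelian Gauss sums, assemble via a $K_1$-description theorem, check congruences via the Chinburg--Pappas--Taylor integral logarithm, pass to inverse limits), but there are several places where the proposal asserts things that would not go through or that contradict the paper.

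\textbf{Uniqueness is not proved.} You write ``Uniqueness follows from the fact that the axioms\ldots determine $\tau(L/K)$ up to\ldots which is controlled by the algebraic condition stated in the appendix.'' This is not a proof. The paper's main theorem is explicitly stated \emph{``up to uniqueness''}, and Proposition \ref{propsuffcondition} merely gives a \emph{sufficient} condition (injectivity of $K_1(\widetilde{\Lambda_O(G)}) \hookrightarrow \ilim{n} K_1(\widetilde{\Lambda_O(G)}/I^n)$) that is not known to hold. The ``intersection of kernels of projections through characters'' is the $SK_1$-type obstruction; over $\tilde\Lambda$ for an infinite $p$-adic Lie group one does not know this vanishes, and indeed this is precisely why only existence is claimed.

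\textbf{The reduction is not to finite $p$-groups.} Morita theory and a naive block decomposition cannot reduce $K_1(J[G])$ to $p$-group data for a general finite $G$. The paper uses the Dress/Wall induction theorem to write $K_1(J[G]) \cong \ilim{U} K_1(J[U])$ over $\Omega$-elementary subgroups $U$ (with $\Omega = Frac(J)$), and each elementary class requires its own argument: for $p$-$\Omega$-elementary groups it reduces to $H \rtimes \Gamma \times \Delta$ with $N,\Gamma$ cyclic $p$-groups, and the $K_1$-description of Theorem~\ref{theoremk1} applies only there; for $q$-$\Omega$-elementary ($q \neq p$) a different description (the two-component $\theta_G$) is used. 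Moreover, the assembly is carried out by the multiplicative map $\theta_{G,J}$; the additive $\beta_{G,J}$ enters only through the integral-logarithm comparison, not as the gluing map.

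\textbf{The passage to infinite $G$ and arbitrary $T$ is more delicate than ``formal''.} For arbitrary $T$ over a finite group ring $O[P]$, the $Gal(\overline K/K)$-action factors through a \emph{$p$-adic Lie} quotient $G$ of $Gal(\overline K/K)$, not a finite one, so one must already have the Tate-motive case for compact $p$-adic Lie $G$ in hand, not just for finite Galois groups $G_L$. Further, one cannot just take inverse limits in $K_1(\widetilde{O[G/U]})$: the natural comparison between $K_1(\tilde\Lambda)$ and $\ilim{U} K_1(\widetilde{O[G/U]})$ is made via the bottom row of the $1 - \varphi_p$ diagram, whose exactness is unknown. The paper therefore uses property (iv) \emph{constructively} --- choosing $f$ with $(1-\varphi_p)f = [T,rec(p)]$, descending $\epsilon_U f_U^{-1}$ to $K_1'(\Lambda)$, and setting $\epsilon_0 := \iota(u)f$. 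Your ``continuity of $\beta_{G,J}$'' does not address this issue.

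A minor point: the module in the paper's Tate-motive step is $\Lambda^{\varhash}$ with the twisted action $\sigma \cdot t = t\bar\sigma^{-1}$, not the regular representation with trivial Galois action; without this twist the interpolation property $(v')$ would not recover the Deligne--Langlands constants at nontrivial $\rho$.
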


Our main result is the following

\begin{theorem} Conjecture \ref{conjecture} is true up to uniqueness.
\label{maintheorem}
\end{theorem}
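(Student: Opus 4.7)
The strategy, as outlined in the introduction, has two stages. First I would construct $\epsilon_{0,\Lambda}(K,T,\psi)$ in the ``Tate motive'' case: $\Lambda = O[Gal(L/K)]$ for a finite Galois extension $L/K$ and $T$ the regular representation. Then, following the Huber--Kings bootstrapping philosophy, I would extend to arbitrary Iwasawa algebras $\Lambda_O(G)$ and arbitrary finitely generated projective modules $T$ using functoriality (axiom (ii)) and compatibility under inverse limits.

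For Stage 1 in the abelian case, axiom (v) combined with the explicit formulae (i)--(ii) at the end of section \ref{sectionrecall} pins down an unambiguous candidate $\epsilon_{0,O[G]}(K,T,\psi) \in K_1(\widetilde{O[G]})$ assembled from Galois Gauss sums, and one verifies axioms (i)--(vi) directly from these formulae. For a non-abelian extension, the values at characters are again forced by (v) together with the Brauer induction expression recalled at the end of section \ref{sectionrecall}, so the problem becomes assembling these values into a single element of $K_1(\widetilde{O[Gal(L/K)]})$. Here I would invoke the description of $K_1$ of integral group rings announced in the appendix (theorem \ref{theoremk1}), which presents $K_1(O[G])$ as the kernel (equivalently, image) of explicit maps involving abelian subquotients of $G$ -- the map $\beta_{G,J}$ alluded to in the introduction, dual to Brauer induction. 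Existence of $\epsilon_{0,O[G]}(K,T,\psi)$ thereby reduces to a finite list of congruences among abelian Gauss sums indexed by pairs of abelian subquotients of $G$.

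The heart of the argument, and the main obstacle I anticipate, is verifying these congruences. Following the template of Taylor's proof of Fr\"{o}hlich's conjecture, I would apply the integral logarithm of Chinburg--Pappas--Taylor to convert the multiplicative congruences in a $p$-adic completion of $K_1(O[G])$ into additive identities between $p$-adic logarithms of the explicit Gauss-sum expressions given by (i)--(ii). These additive identities are amenable to direct computation using the explicit shape of $\epsilon_{0,E}(K,\chi,\psi,dx)$, basic properties of $rec_K$, and the observation that because $l \neq p$ the values of a ramified character lie in roots of unity of order prime to $p$, which simplifies the $p$-adic analysis considerably. I expect the resulting character-sum identities to be the technical core of section \ref{sectionspecialcase}, and to look different from the congruences in Taylor \cite{Taylor:1978} (as flagged in the introduction).

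Stage 2 is then essentially formal. Given a triple $(\Lambda_O(G),T,\psi)$, I choose an open normal subgroup $U$ of $G$ and apply axiom (ii) to the surjection $\Lambda_O(G) \twoheadrightarrow O[G/U]$, reducing to finite $G$; a free presentation of $T$ together with axiom (i) reduces $T$ to the regular representation; one then concludes via Stage 1, glueing over $U$ using the compatibility of the Stage 1 constructions built into axiom (ii) at finite level. Axioms (iii), (iv), (vi) propagate through these reductions because they hold at the finite abelian level by direct inspection of the explicit formulae. Since theorem \ref{maintheorem} only asserts existence and not uniqueness -- the latter being the subject of the separate sufficient algebraic condition flagged in the introduction -- no uniqueness argument is required at this stage.
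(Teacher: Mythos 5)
Your high-level strategy matches the paper -- construct $\epsilon_0$ first for abelian quotients, assemble them into an element of $K_1$ of a group ring via a theorem of the shape of Theorem \ref{theoremk1}, then bootstrap to general $\Lambda$ and $T$ via functoriality. But there are a few places where you've either misplaced a key tool or glossed over a step that is genuinely non-formal.

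\textbf{Where the integral logarithm actually lives.} You propose to verify the congruences coming from Theorem \ref{theoremk1} by applying the Chinburg--Pappas--Taylor integral logarithm to turn multiplicative congruences on Gauss sums into additive identities between $p$-adic logarithms. That is not how the paper proceeds, and I do not think it is the path of least resistance. In the paper the integral logarithm is used only inside the \emph{proof} of Theorem \ref{theoremk1}, i.e.\ as a purely algebraic tool for characterising the image of $\theta_{G,J}$. The analytic input -- verification that the tuple of abelian epsilon constants actually satisfies the conditions M1--M3 (Theorem \ref{theorempgroups}) -- is done \emph{without} taking any logarithms: M1 and M2 follow formally from inductivity and conjugation-invariance of Deligne--Langlands constants, and the crucial congruence M3 is verified elementarily by grouping the terms of the explicit Gauss sum into $\Gamma$-orbits, using that orbits of full length land in the ideal $T_i$ and that the short orbits match the image of the Verlagerung. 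Your proposed logarithmic reformulation would force you to relitigate the structure that Theorem \ref{theoremk1} has already packaged for you.

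\textbf{An incorrect simplifying claim.} You remark that because $l \neq p$ the values of a ramified character lie in roots of unity of order prime to $p$, and that this simplifies the $p$-adic analysis. This is false in the present setting: the characters in question are characters of quotients of the $p$-adic Lie group $G = Gal(L/K)$, so they typically take $p$-power root-of-unity values. (It is the restriction to wild inertia that is trivial when $l\neq p$, not the character itself.) This is exactly what makes the $p$-adic congruence M3 nontrivial, and why the paper proves it by a combinatorial orbit argument rather than by a logarithm computation.

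\textbf{The scope of Theorem \ref{theoremk1}.} Theorem \ref{theoremk1} is \emph{not} a description of $K_1(O[G])$ for an arbitrary finite $G$; it only treats $G = (N \rtimes \Gamma) \times \Delta$ with $N,\Gamma$ cyclic $p$-groups and $\Delta$ cyclic of order prime to $p$. The paper passes from this special case to general finite $G$ by a chain of further reductions: abelian, then abelian times prime-to-$p$, then $p$-elementary, then $\Omega$-elementary (Theorem \ref{theoremqelementary}), and finally arbitrary finite $G$ via the Dress/Wall induction theorems expressing $K_1(J[G])$ as an inverse limit over $\Omega$-elementary subgroups. Your proposal collapses all of this into a single invocation of the appendix theorem.

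\textbf{Stage 2 is not ``essentially formal''.} Passing from finite quotients to the Iwasawa algebra requires more than the inverse-limit isomorphism $K_1(\Lambda) \cong \ilim{U} K_1(O[G/U])$, because the analogous map for $\tilde\Lambda$ is \emph{not} known to be an isomorphism. The paper has to use property (iv) in an essential way: one produces $f \in K_1(\tilde\Lambda)$ with $(1-\varphi_p)(f) = [T,rec(p)]$ via the surjectivity of $1-\varphi_p$ (Fukaya--Kato 3.4.5), shows that $(\epsilon_U f_U^{-1})_U$ descends to $\ilim{U} K_1'(O[G/U]) \cong K_1'(\Lambda)$, and reassembles. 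Your sketch omits this $\varphi_p$-equivariance step, which is precisely where the non-formal content of the final glueing lies.
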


\begin{remark} Since we cannot prove uniqueness it is a little awkward to prove properties (i) and (ii) in the conjecture. We contend with proving existence of epsilon constants satisfying (i) and (ii). 
\end{remark}

\section{A special case} \label{sectionspecialcase} First we prove the theorem when $G = Gal(L/K)$ for a $p$-adic Lie extension $L$ of $K$, the ring $O$ is $\mathbb{Z}_p$ and the ring $\Lambda := \Lambda_{\mathbb{Z}_p}(G)$. The module is $T = \Lambda$ as a $\Lambda$-module with the following action of $Gal(\overline{K}/K)$. 
\[
\sigma \cdot t = t \cdot \overline{\sigma}^{-1}.
\]
Here $\overline{\sigma}$ is the image of $\sigma \in Gal(\overline{K}/K)$ in the quotient $G$ and the multiplication on the right hand side is in the ring $\Lambda$. We denote this $\Lambda$-module $T$ by $\Lambda^{\varhash}$. More precisely, we prove the following 

\begin{theorem} There exists $\epsilon_{0, \Lambda}(K, \Lambda^{\varhash}, \psi) \in K_1(\tilde{\Lambda})$ satisfying the following conditions: 
\begin{itemize}
\item[($ii'$)] Let $L' \subset L$ be a Galois extension of $K$ such that $L/L'$ is finite and $G'=Gal(L'/K)$. Put $\Lambda' = \Lambda_{\mathbb{Z}_p}(G')$. Then the homorphism $\Lambda' \otimes_{\Lambda} : K_1(\tilde{\Lambda}) \rightarrow K_1(\tilde{\Lambda'})$ sends $\epsilon_{0, \Lambda}(K,\Lambda^{\varhash}, \psi)$ to $\epsilon_{0, \Lambda'}(K,\Lambda'^{\varhash}, \psi)$.
\item[($iii'$)] If $c \in O_K^{\times}$, then 
\begin{align*}
\epsilon_{0,\Lambda}(K, \Lambda^{\varhash}, c\psi) & = [\Lambda^{\varhash}, rec(c)]\epsilon_{0,\Lambda}(K, \Lambda^{\varhash}, \psi) \\
& = rec(c)^{-1}\epsilon_{0,\Lambda}(K, \Lambda^{\varhash}, \psi).
\end{align*}
\item[($iv'$)] We have
\begin{align*}
\varphi_p(\epsilon_{0,\Lambda}(K, \Lambda^{\varhash}, \psi)) & = [\Lambda^{\varhash}, rec(p)]\epsilon_{0,\Lambda}(K, \Lambda^{\varhash},\psi) \\
& = rec(p)^{-1}\epsilon_{0,\Lambda}(K, \Lambda^{\varhash},\psi)
\end{align*}
\item[($v'$)] Let $\rho$ be a finite dimensional $\mathbb{C}_p$-representation of $G$. We consider it as a representation of $W(\overline{K}/K)$ by first inflating it to $Gal(\overline{K}/K)$ and then restricting it to $W(\overline{K}/K)$. If $\rho$ is a continuous representation of $W(\overline{K}/K)$ (with discrete topology on $\mathbb{C}_p$), then we have
\[
\rho(\epsilon_{0,\Lambda}(K,\Lambda^{\varhash}, \psi)) = \epsilon_{0, \mathbb{C}_p}(K, V_{\rho}, \psi, dx) 
\]
Here $\rho$ on the left hand side is the evaluation map $\rho: K_1(\tilde{\Lambda}) \rightarrow \mathbb{C}_p$ and $V_{\rho}$ is the representation space of $\rho$.
\item[($vi'$)] If $\Lambda^{\varhash}$ is unramified, i.e. if the extension $L/K$ is unramified, then 
\begin{align*}
\epsilon_{0,\Lambda}(K,\Lambda^{\varhash}, \psi) & = l^{dn(\psi)}[T, -\varphi_l^{1+n(\psi)}] \\
& = -l^{dn(\psi)}\varphi^{-1-n(\psi)}
\end{align*}
\end{itemize}
 
\label{theoremspecialcase}
\end{theorem}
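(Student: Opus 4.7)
The plan is to first reduce to the case when $G$ is a finite $p$-adic Lie group. Since $K_1(\tilde{\Lambda}) = \varprojlim_U K_1(\tilde{\mathbb{Z}}_p[G/U])$ as $U$ runs over open normal subgroups, it suffices to construct a compatible family of $\epsilon_{0,\mathbb{Z}_p[G/U]}(K,(\mathbb{Z}_p[G/U])^{\varhash},\psi)$; property $(ii')$ for the finite layers is precisely the compatibility that assembles these into an element over $\Lambda$. So the problem is reduced to the construction for finite quotients $G$, where $\tilde{\mathbb{Z}}_p[G] = J[G]$.

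For $G$ abelian, the natural candidate is defined character-by-character using the explicit formulae at the end of Section \ref{sectionrecall}: for each character $\chi$ of $G$, one demands $\chi(\epsilon_{0,\Lambda}(K,\Lambda^{\varhash},\psi))$ to equal $-\chi(\varphi^{1+n(\psi)})\, l^{dn(\psi)}$ when $\chi$ is unramified and the corresponding Gauss-sum expression when $\chi$ is ramified. Because $G$ is finite abelian, $K_1$ of $J[G]$ embeds into $\prod_\chi \bar{J}^{\times}$ via characters, and one must verify that the prescribed tuple actually lies in this image. This integrality follows from standard properties of Gauss sums (their values lie in cyclotomic extensions of $J$) and from Galois-equivariance of the assignment $\chi \mapsto \chi(\epsilon)$, yielding an honest element $\epsilon_{0,\Lambda}(K,\Lambda^{\varhash},\psi) \in K_1(J[G])$.

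For $G$ finite non-abelian, I would invoke the description of $K_1(J[G])$ developed in the appendix. Specifically, the theorem \ref{theoremk1} characterizes $K_1(J[G])$ as the subset of $\prod_{H} K_1(J[H^{ab}])$ (with $H$ ranging over suitable subgroups, or equivalently abelian subquotients) cut out by congruences given by the map $\beta_{G,J}$ dual to the Brauer induction map. The candidate element is then the tuple whose $H$-component is the abelian epsilon constant $\epsilon_{0, J[H^{ab}]}(K, (J[H^{ab}])^{\varhash}, \psi \circ \mathrm{Tr}_{L^H/K})$, modified by $\lambda$-factors from property (6) of Section \ref{sectionrecall} to pass between $K$ and the fixed fields $L^H$. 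The heart of the argument is checking that these abelian constants satisfy the congruences defining the image of $K_1(J[G])$. Following the Chinburg-Pappas-Taylor integral logarithm, the multiplicative congruences translate to additive congruences between (logarithms of) Gauss sums under restriction to abelian subquotients, and these can be reduced via the inductivity in degree zero of Deligne-Langlands (property (6)) and the twist/Frobenius formula (property (8)) to explicit identities on characters.

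Once existence is settled, properties $(iii')$ through $(vi')$ reduce to the corresponding classical properties (2), unramified-twist (8), the functional equation, and (4) applied to each character $\chi$ of a finite quotient. The main obstacle is undoubtedly the congruence verification in the non-abelian step: matching the abelian epsilon constants across all abelian subquotients requires a careful bookkeeping of $\lambda$-factors and of the Swan conductor contributions in property (8), and translating these into the additive world of the integral logarithm is where the real work lies. The argument departs from Taylor's approach to Fröhlich's conjecture mainly in that the congruences one obtains, while reminiscent, are tailored to the $\beta_{G,J}$ description rather than to Galois Gauss sums, so no direct appeal to Taylor, Holland-Wilson or Deligne-Henniart is needed.
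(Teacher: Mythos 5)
Your high-level strategy (build from abelian groups via explicit Gauss sums, then force the non-abelian case through a description of $K_1(J[G])$ and congruences) is the right one, but there are two genuine gaps and one misattribution that together would block the argument.

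First, your opening reduction --- ``$K_1(\tilde{\Lambda}) = \varprojlim_U K_1(\tilde{\mathbb{Z}_p}[G/U])$, so it suffices to produce a compatible family over finite quotients'' --- is not available. With $\mathbb{Z}_p$-coefficients one does have $K_1(\Lambda(G)) \cong \varprojlim_U K_1(\mathbb{Z}_p[G/U])$ (Fukaya--Kato 1.5.1), but after tensoring with $J = \widehat{\mathbb{Z}_p^{ur}}$ the corresponding map $K_1(\Lambda_J(G)) \to \varprojlim_U K_1(J[G/U])$ is \emph{not} known to be an isomorphism, and the paper says so explicitly. The actual passage to infinite $G$ is done last, and it uses a diagram chase: the surjectivity of $1-\varphi_p$ on $K_1(\Lambda_J(G))$ produces an auxiliary element $f$ with $(1-\varphi_p)(f) = [\Lambda^{\varhash}, rec(p)]$, and then property $(iv')$ --- which you list as something to be checked afterwards --- is used as an input to show that $\epsilon_U f_U^{-1}$ lands in $\varprojlim_U K_1'(\mathbb{Z}_p[G/U]) \cong K_1'(\Lambda(G))$, from which $\epsilon_{0,\Lambda}$ is reassembled as $\iota(u)f$. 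Without this trick the reduction to finite quotients does not go through.

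Second, Theorem \ref{theoremk1} in the appendix only describes $K_1(J[G])$ for $G = (N \rtimes \Gamma) \times \Delta$ with $N$ and $\Gamma$ cyclic $p$-groups and $\Delta$ cyclic of order prime to $p$. You invoke it for arbitrary finite non-abelian $G$, which it does not cover. The paper has to climb a ladder: abelian $G$, then $\Delta \times H$ with $\Delta$ of order prime to $p$, then $p$-elementary $G$ (where \ref{theoremk1} actually applies, after reducing to the split case by passing to a larger extension), then $q$-$\Omega$-elementary $G$ for $q \neq p$ via an ad hoc $K_1$-description, and finally arbitrary finite $G$ via the Dress--Wall induction theorem $K_1(J[G]) \cong \varprojlim_U K_1(J[U])$ over $\Omega$-elementary subgroups $U$. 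This last step, with its compatibility check of the tuple $(\lambda_U \epsilon_U)_U$ under norm maps and conjugation, is entirely absent from your outline and cannot be absorbed into \ref{theoremk1}.

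Finally, a smaller point on where the integral logarithm enters. You say the congruence verification ``translates to additive congruences via the integral logarithm.'' In the paper the logarithm of Chinburg--Pappas--Taylor is used inside the \emph{proof} of Theorem \ref{theoremk1} (to establish the description of the image by M1--M3), but the key Theorem \ref{theorempgroups} --- that the explicit tuple of abelian epsilon constants satisfies M3 --- is checked \emph{directly} on the Gauss-sum expressions, by examining $\Gamma$-orbits and the action of the transfer and Frobenius on the explicit sums, with a separate treatment of the scalar $l^{d_i(n(\psi)-k)}$ via Fermat's little theorem (and a warning for $p=2$, $i=1$). If you try to verify the congruences after applying $\log$, you would have to control the $p$-adic valuations of the Gauss sums carefully; the paper avoids this.

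The abelian step and the closing verifications of $(iii')$--$(vi')$ are essentially as you describe. But as written, the proposal assumes an unproved $K_1$-isomorphism at the outset and over-applies the appendix theorem, so it does not constitute a proof.
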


Let us remark that when $G$ is a finite group uniqueness follows immediately from $(v')$ and injection of the map 
\[
\prod_{\rho \in R(G)} \rho : K_1(\tilde{\Lambda}) \rightarrow \prod_{\rho} J[\rho]^{\times}
\]
proven in Izychev-Venjakob \cite{IzychevVenjakob:2010}, corollary 2.44. Here $J[\rho]$ is the ring obtained by adjoining values of $\rho$ to $J$. Thus only property $(v')$ is used for uniqueness. In fact, we use this very strongly in several proofs below. For instance, to prove property $(ii')$ it suffices to show that image of $\epsilon_{0, \Lambda}(K, \Lambda^{\varhash}, \psi)$ takes the same values as $\epsilon_{0, \Lambda'}(K, \Lambda'^{\varhash}, \psi)$ at all representations of $G'$. 

\subsection{$G$ is a finite group}

\subsubsection{\bf $G$ is a finite abelian group:} Let $L/K$ be a finite abelian extension with $G= Gal(L/K)$. Let $a = max\{cond(L/K), 1\}$.  For $0 \leq i \leq a-1$, fix elements $c_{a-i}$ of $K^{\times}$ with valuation $a-i+n(\psi)$. For this paragraph define
\begin{align*}
\epsilon_0& :=\epsilon_{0, \mathbb{Z}_p[G]}(K,\mathbb{Z}_p[G]^{\varhash}, \psi) \\
& := l^{dn(\psi)} \Big(\sum_{i=0}^{a-1}\frac{1}{[(\pi^{a-i}):(\pi^a)]} \Big[\sum_{u \in O_K^{\times} (\text{mod }\pi^a)} rec(c_{a-i}u^{-1}) \psi(uc_{a-i}^{-1})\Big] \Big)\\ 
&= \sum_{i=0}^{a-1} l^{d(n(\psi)-i)} \Big(\sum_{v \in O_K^{\times} (\text{mod } \pi^{a-i})} \psi(vc_{a-i}^{-1}) \Big[ \sum_{ u \in v (\text{mod } \pi^a)} rec(c_{a-i}u^{-1}) \Big] \Big)\\
&\in J[G].
\end{align*}
Note that $[(\pi^{a-i}):(\pi^a)] = l^{di}$ for all $0 \leq i \leq a-1$ and if $u \equiv u' (\text{mod } \pi^{a-i})$, then $\psi(c_{a-i}^{-1}u) = \psi(c_{a-i}^{-1}u')$. 

\begin{lemma} Let $L'$ be a Galois extension of $K$ contained in $L$. Put $G' = Gal(L'/L)$. Under the projection 
\[
J[G] \rightarrow J[G'],
\] 
$\epsilon_{0, \mathbb{Z}_p[G]}(K, \mathbb{Z}_p[G]^{\varhash}, \psi)$ is mapped to $\epsilon_{0,\mathbb{Z}_p[G']}(K,\mathbb{Z}_p[G']^{\varhash}, \psi)$.
\label{lemmaprojection}
\end{lemma}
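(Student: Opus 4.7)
The plan is to verify the lemma by direct computation with the explicit formula. Set $a=\max\{cond(L/K),1\}$ and $a'=\max\{cond(L'/K),1\}$; since $L'\subseteq L$ we have $a'\leq a$. I will choose a single family $\{c_b\}_{b\geq 1}\subset K^{\times}$ with $v(c_b)=b+n(\psi)$ and use it on both sides. Reindexing the sum defining $\epsilon_{0,\mathbb{Z}_p[G]}(K,\mathbb{Z}_p[G]^{\varhash},\psi)$ by $b=a-i$ and projecting each coefficient to $J[G']$ rewrites it as
\[
l^{dn(\psi)}\sum_{b=1}^{a}l^{-d(a-b)}\sum_{u\in O_K^{\times}/\pi^a}rec_{L'/K}(c_bu^{-1})\psi(uc_b^{-1}).
\]

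The key manoeuvre is then to fibre the sum over $u$ through the surjection $(O_K/\pi^a)^{\times}\twoheadrightarrow(O_K/\pi^{a'})^{\times}$, writing $u=u'(1+\pi^{a'}t)$ with $u'\in (O_K/\pi^{a'})^{\times}$ and $t\in O_K/\pi^{a-a'}$. For $b\leq a'$ the factor $rec_{L'/K}(c_bu^{-1})$ is constant on each fibre, since $rec_{L'/K}$ kills $1+\pi^{a'}O_K$ in $G'$, and $\psi(uc_b^{-1})$ is also fibre-constant because it depends only on $u\bmod\pi^b$ and $b\leq a'$. Counting the fibre size $l^{d(a-a')}$, rearranging the power of $l$, and setting $j=a'-b$ identifies the $b$-summand with the $j$-summand of $\epsilon_{0,\mathbb{Z}_p[G']}(K,\mathbb{Z}_p[G']^{\varhash},\psi)$.

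The only real work is in handling $b>a'$, and I expect this to be the main obstacle. Here the $rec$-factor is still constant on fibres, but the additive factor splits as $\psi(uc_b^{-1})=\psi(u'c_b^{-1})\psi(u'\pi^{a'}tc_b^{-1})$. The character $t\mapsto\psi(u'\pi^{a'}tc_b^{-1})$ on $O_K/\pi^{a-a'}$ is well-defined because $v(u'\pi^{a}c_b^{-1})=a-b-n(\psi)\geq -n(\psi)$, and nontrivial because $v(u'\pi^{a'}c_b^{-1})=a'-b-n(\psi)<-n(\psi)$; orthogonality of characters then forces the fibre sum to vanish, so the total contribution of these levels is zero. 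Once this orthogonality check is carried out, the lemma follows immediately, and in fact the same computation is exactly what is needed later to confirm that evaluating the element at any character $\chi$ of $G$ recovers the Deligne--Langlands $\epsilon$-constant independently of the chosen $a\geq a(\chi)$.
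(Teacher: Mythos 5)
Your proof is correct and follows essentially the same route as the paper's: after reindexing $b = a-i$, you split into the ranges $b \leq a'$ (the main terms, which match after counting the fibres of $(O_K/\pi^a)^\times \twoheadrightarrow (O_K/\pi^{a'})^\times$) and $b > a'$ (which vanish), exactly as the paper splits into $i \geq a-b$ and $i < a-b$ with $b = \max\{\mathrm{cond}(L'/K),1\}$. The only difference is presentational: the paper simply asserts the inner sum $\sum_{u \equiv v \;(\pi^{a'})} \psi(uc_{a-i}^{-1}) = 0$ in the vanishing range, whereas you justify it by exhibiting $t \mapsto \psi(u'\pi^{a'}tc_b^{-1})$ as a well-defined nontrivial additive character of $O_K/\pi^{a-a'}$ and invoking orthogonality; your valuation checks $a-b-n(\psi) \geq -n(\psi)$ and $a'-b-n(\psi) < -n(\psi)$ are precisely the conditions implicit in the paper's claim.
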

\begin{proof} Let $b := max\{cond(L'/K),1\}$. If $i < a- b$, then the projection of 
\[
\sum_{ u \in O_K^{\times} (\text{mod } \pi^a)} rec_{L/K}(c_{a-i}u^{-1}) \psi(uc_{a-i}^{-1})
\]
in $J[G']$ is
\begin{align*}
&\sum_{v \in O_K^{\times} (\text{mod } \pi^b)} rec_{L'/K}(c_{a-i}v^{-1}) \Big(\sum_{u \in v (\text{mod } \pi^a)} \psi(uc_{a-i}^{-1})\Big) = 0.
\end{align*}
On the other hand if $i \geq a-b$, then the projection of 
\[
\sum_{ u \in O_K^{\times} (\text{mod } \pi^a)} rec_{L/K}(c_{a-i}u^{-1}) \psi(uc_{a-i}^{-1})
\]
in $J[G']$ is
\begin{align*}
& \sum_{v \in O_K^{\times} (\text{mod } \pi^b)} rec_{L'/K}(c_{a-i}v^{-1})\Big(\sum_{u \in v(\text{mod } \pi^a)} \psi(uc_{a-i}^{-1})\Big) \\
= & [(\pi^b):(\pi^a)] \sum_{v \in O_K^{\times} (\text{mod } \pi^b)} rec_{L'/K}(c_{a-i}v^{-1})\psi(vc_{a-i}^{-1}).
\end{align*}
The lemma is now clear.  
\end{proof} 

\begin{lemma} Let $\chi$ be a one dimensional character of $G$ with values in $\mathbb{C}_p$. If the conductor of $\chi$ is $a$ or if $a=1$ and $\chi$ is unramified, then we have
\[
\chi(\epsilon_0) = \epsilon_{0,\mathbb{C}_p}(K, \chi, \psi,dx).
\]
\label{lemmaevaluation}
\end{lemma}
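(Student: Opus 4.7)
The plan is to apply the character $\chi$ term-by-term to the explicit formula for $\epsilon_0$ and match the result against the closed-form expressions for $\epsilon_{0,\mathbb{C}_p}(K,\chi,\psi,dx)$ recalled in cases $(i)$ and $(ii)$ at the end of Section \ref{sectionrecall}.

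In the ramified case with $a(\chi) = a \geq 1$, I would start from the second expression for $\epsilon_0$ and analyse the inner coset sum $\sum_{u \in v\,(\text{mod }\pi^a)} \chi(rec(c_{a-i} u^{-1}))$ for each $i$ separately. Parametrising $u = v(1+sv^{-1}\pi^{a-i})$ with $s$ running over $O_K/\pi^i$, this factors as $\chi(rec(c_{a-i}v^{-1}))$ times the character sum over the subgroup $(1+(\pi^{a-i}))/(1+(\pi^a))$. For every $i \geq 1$ one has $1+(\pi^{a-i}) \supseteq 1+(\pi^{a-1})$, and since the conductor of $\chi$ is exactly $a$ the character $\chi \circ rec_K$ is nontrivial on this larger subgroup; orthogonality therefore kills the sum. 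Only the $i=0$ term survives, giving
\[
\chi(\epsilon_0) = l^{dn(\psi)}\sum_{v \in O_K^\times\,(\text{mod }\pi^a)} \chi(rec(c_a v^{-1}))\psi(vc_a^{-1}),
\]
which matches formula $(ii)$ once one rewrites $\chi^{-1}(rec(uc_a^{-1})) = \chi(rec(c_a u^{-1}))$ and takes $c = c_a$.

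In the unramified case $a=1$, only the index $i=0$ is present. Triviality of $\chi \circ rec_K$ on $O_K^\times$ makes $\chi(rec(c_1 v^{-1})) = \chi(rec(c_1))$ independent of $v$, reducing the computation to the additive Gauss sum $\sum_{v \in (O_K/\pi)^\times}\psi(vc_1^{-1})$. This evaluates to $-1$ because $v \mapsto \psi(vc_1^{-1})$ is a nontrivial character of the residue field (as $v(c_1^{-1}) = -1 - n(\psi)$ and $\psi$ is nontrivial on $(\pi^{-1-n(\psi)})$ by the very definition of $n(\psi)$), so the full sum over $O_K/\pi$ vanishes and only the term $v=0$, contributing $1$, must be subtracted. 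Writing $c_1 = u\pi^{n(\psi)+1}$ with $u \in O_K^\times$ and invoking unramifiedness of $\chi$ then delivers $-l^{dn(\psi)}\chi(rec(\pi))^{n(\psi)+1}$, in agreement with formula $(i)$.

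The delicate point, and the only real obstacle, is the orthogonality step in the ramified case: the vanishing of the $i \geq 1$ contributions uses crucially that the conductor of $\chi$ equals $a$ and is not merely bounded by $a$. If this hypothesis were dropped, spurious contributions from characters of smaller conductor would have to be tracked and cancelled, which is precisely why the statement restricts to characters of conductor exactly $a$ (or to the unramified ones when $a=1$); this is also what makes this lemma, together with the projection lemma \ref{lemmaprojection}, the natural ingredient for the inductive compatibility arguments that follow.
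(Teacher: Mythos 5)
Your proposal is correct and follows the same approach as the paper: apply $\chi$ to the third displayed expression for $\epsilon_0$, factor the inner coset sum through a lift $w$ of $v$, and kill the $i\geq 1$ terms by orthogonality of a nontrivial character on $(1+(\pi^{a-i}))/(1+(\pi^a))$, which is where the hypothesis $a(\chi)=a$ (rather than $\leq a$) is used. The only difference is one of thoroughness: the paper dismisses the unramified case as ``straightforward,'' whereas you carry out the additive Gauss sum over the residue field and check that $c_1=u\pi^{n(\psi)+1}$ produces the stated $-l^{dn(\psi)}\chi(rec(\pi))^{n(\psi)+1}$.
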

\begin{proof} First assume that $\chi$ has conductor $a$. Then 
\begin{align*}
\chi(\epsilon_0) = & l^{dn(\psi)} \sum_{ u \in O_K^{\times} (\text{mod } \pi^a} \psi(uc_a^{-1})\chi(rec(c_au^{-1}))  \\
+ & \sum_{i=1}^{a-1} l^{d(n(\psi)-i)} \Big(\sum_{v \in O_K^{\times} (\text{mod } \pi^{a-i})} \psi(vc_{a-i}^{-1}) \big[ \sum_{u \in v(\text{mod } \pi^a)} \chi(rec(c_{a-i}u^{-1}))\big]\Big)
\end{align*}
Let $w \in O_K^{\times} / 1 + (\pi^a)$ be any elements whose image in $O_K^{\times}/1+(\pi^{a-i})$ is $v$. Then sum in square brackets above is 
\begin{align*}
\sum_{u \in v (\text{mod } \pi^a)} \chi(rec(c_{a-i}u^{-1})) & = \chi(c_{a-i}w^{-1}) \sum_{u \in 1 (\text{mod } \pi^{a})} \chi(rec(c_{a-i}u^{-1})) \\
\end{align*}
which is zero since $\chi$ is nontrivial on $1+(\pi^{a-i})$ for any $i \geq 1$. 

Next we assume that $\chi$ is unramified. Therefore $a=1$ and the calculations is straightforward.   
\end{proof}

\begin{proposition} Theorem \ref{theoremspecialcase} holds if $G$ is a finite abelian group
\end{proposition}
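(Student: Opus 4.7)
Since $G$ is finite abelian, $\tilde\Lambda = J[G]$ is commutative and $K_1(\tilde\Lambda) = J[G]^{\times}$. My plan is to take the explicit $\epsilon_0$ already defined in the excerpt as the candidate for $\epsilon_{0,\mathbb{Z}_p[G]}(K, \mathbb{Z}_p[G]^{\varhash}, \psi)$ and verify $(ii')$–$(vi')$ one by one. The decisive tool is the Izychev--Venjakob injectivity
\[
\prod_{\chi} \chi : K_1(J[G]) \hookrightarrow \prod_{\chi} J[\chi]^{\times}
\]
cited in the remark following the theorem, which reduces every equality in $K_1(J[G])$ to a check at each character $\chi$ of $G$.

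First I verify $(ii')$: this is exactly Lemma \ref{lemmaprojection}, since for abelian $G$ every intermediate Galois extension $L'/K$ inside $L/K$ yields a quotient $G \twoheadrightarrow G' = Gal(L'/K)$, and $\Lambda' \otimes_{\Lambda} (-)$ on $K_1$ is induced by the ring quotient $J[G] \twoheadrightarrow J[G']$. Next I verify $(v')$: when $\chi$ has conductor equal to $a$, or $a=1$ with $\chi$ unramified, Lemma \ref{lemmaevaluation} gives $\chi(\epsilon_0) = \epsilon_{0,\mathbb{C}_p}(K, \chi, \psi, dx)$ directly. For a general character $\chi$, set $L_\chi = L^{\ker\chi}$, $G_\chi = Gal(L_\chi/K)$, and let $\tilde\chi$ be the faithful character of $G_\chi$ induced by $\chi$. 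The conductor of $\tilde\chi$ equals the conductor of $L_\chi/K$ by local class field theory, so Lemma \ref{lemmaevaluation} applies to $(L_\chi/K, G_\chi, \tilde\chi)$; combining with $(ii')$ for the projection $J[G] \twoheadrightarrow J[G_\chi]$ yields $\chi(\epsilon_0) = \tilde\chi(\epsilon_{0,\mathbb{Z}_p[G_\chi]}) = \epsilon_{0,\mathbb{C}_p}(K, \chi, \psi, dx)$. The unramified case with $a \ge 2$ is handled the same way, taking $L_\chi$ to be the maximal unramified subextension through which $\chi$ factors. As a by-product, $\epsilon_0 \in J[G]^{\times}$ because Deligne's local $\epsilon$-constants are nonzero.

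For $(iii')$, $(iv')$, and $(vi')$ I apply $(v')$ and the injectivity above to reduce each identity to a character-wise comparison of classical Deligne constants. Property $(iii')$ then follows from Deligne's property (2) of section \ref{sectionrecall} combined with $\|c\| = 1$ for $c \in O_K^{\times}$. For $(iv')$, observe that $\varphi_p$ fixes $G \subset J[G]^{\times}$ and the integer factors $l^{d(n(\psi)-i)}$, while raising every $p$-power root of unity $\psi(\cdot) \in J$ to its $p$-th power; the identity $\psi(x)^p = \psi(px)$ turns this into the substitution $\psi \mapsto p\psi$ in the formula for $\epsilon_0$, so $(iv')$ reduces to $(iii')$ with $c = p \in O_K^{\times}$. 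For $(vi')$: when $L/K$ is unramified, $a = 1$ and only the $i=0$ term of the formula for $\epsilon_0$ survives; since $rec$ kills $O_K^{\times}$ and sends $\pi$ to $\varphi^{-1}$, one directly obtains $\epsilon_0 = l^{dn(\psi)} \varphi^{-1-n(\psi)} \sum_{u \in (O_K/\pi)^{\times}} \psi(u\pi^{-1-n(\psi)})$, and the remaining Gauss sum evaluates to $-1$ by the standard orthogonality argument using the definition of $n(\psi)$.

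The principal difficulty is book-keeping around conventions: the $\varhash$-twisted Galois action, the normalisation $rec_K(\pi) = \varphi^{-1}$, and the compatibility between the explicit Gauss-sum formula for $\epsilon_0$ and Deligne's formula for one-dimensional $\epsilon$-factors recalled in section \ref{sectionrecall}. Every sign in the verification of $(iii')$–$(vi')$ depends on these. Once all conventions are pinned down, the underlying computations reduce to elementary manipulations of characters modulo $\pi^a$ and substitutions in Gauss sums.
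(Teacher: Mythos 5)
Your approach matches the paper's: take the explicit Gauss-sum element $\epsilon_0$, use the injectivity of character evaluation on $K_1(J[G])$ to reduce every identity to a classical Deligne--Langlands computation, and verify $(ii')$ via Lemma~\ref{lemmaprojection} and $(v')$ via Lemmas~\ref{lemmaprojection} and~\ref{lemmaevaluation} (first for characters of maximal conductor, then in general by pushing forward to $G_\chi = G/\ker\chi$). The verifications of $(iii')$, $(iv')$, $(vi')$ by character-wise comparison are also in the spirit of the paper, which merely says these are ``easily verified''.

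There is, however, a genuine gap in the claim ``$\epsilon_0 \in J[G]^\times$ because Deligne's local $\epsilon$-constants are nonzero''. Two things are missing. First, ``nonzero'' is far too weak: you need each $\chi(\epsilon_0)$ to be a \emph{$p$-adic unit}, which is true because for $l\neq p$ the Deligne constant is (up to a power of $l$) a Gauss sum with absolute value a power of $l$, hence a unit in $O_{\mathbb{C}_p}$ --- but that must be said. Second, and more seriously, even knowing that $\chi(\epsilon_0)$ is a unit in $J[\chi]$ for every $\chi$ only tells you that $\epsilon_0$ is a unit in the maximal order $\mathfrak{M}$ of $\Omega[G]$, not a unit in $J[G]$ itself: the embedding $J[G]\hookrightarrow \prod_\chi J[\chi]$ is injective but not surjective, and a priori the inverse $\epsilon_0^{-1}$ could fail to lie in $J[G]$. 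The paper handles this precisely by invoking (following Taylor, \S4.4) the nontrivial fact that $J[G]\cap\mathfrak{M}^\times = J[G]^\times$. For $G$ abelian this can be seen directly: decomposing $G = G_p \times G_{p'}$ one gets $J[G] \cong \prod_i O_i[G_p]$ with each $O_i[G_p]$ local of residue characteristic $p$, so an element is a unit iff its image under the trivial character of $G_p$ mod $p$ is nonzero, which is implied by that character value being a $p$-adic unit. You need to supply some such argument; the character-injectivity alone does not close this step.
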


\begin{proof} We first notice that $\epsilon_0$ lies in $J[G]^{\times} = K_1(J[G])$ (this is similar to the argument in section 4.4 in Taylor \cite{Taylor:1978}). The main point is that $J[G] \cap \mathfrak{M}^{\times} = J[G]^{\times}$, where $\mathfrak{M}$ is the maximal order in $\widehat{\mathbb{Q}_p^{ur}}[G]$ consisting of all $x$ such that $\rho(x)$ is a $p$-adic unit for all $\rho \in R(G)$. Next we show that $\epsilon_0$ satisfies all the conditions in the theorem. Condition $(ii')$ is verified in lemma \ref{lemmaprojection}. Conditions $(iii')$ and $(iv')$ are easily verified. Condition $(v')$ is verified using lemmas \ref{lemmaprojection} and \ref{lemmaevaluation}. The last condition is again easily verified from the definition of $\epsilon_0$.  
 \end{proof}

\begin{remark} If $G$ is of the form $\Delta \times H$, then for any one dimensional character $\rho$ of $\Delta$ we put 
\[
\epsilon_{0,\mathbb{Z}_p[\rho][H]}(K, T\otimes \rho, \psi) = \rho(\epsilon_{0,\mathbb{Z}_p[G]}(K,T,\psi)) \in J[\rho][H]^{\times}
\]
Here $J[\rho]$ is the ring obtained by adjoining the image of $\rho$ to $J$ and $\rho$ on the right hand side is the $J$-linear map 
\[
\rho: J[\Delta \times H] \rightarrow J[\rho][H]
\]
given by $(\delta, h) \mapsto \rho(\delta)h$ for every $(\delta, h) \in \Delta \times H$. Rest of the notation is self evident and we do not explain it further.
\label{remark}
\end{remark}

\subsubsection{\bf $G$ is abelian times a finite group of order prime to $p$} 
\begin{proposition} Theorem \ref{theoremspecialcase} holds when $G$ is of the form $G = \Delta \times H$ where $H$ is a finite abelian group and $\Delta$ is a finite group of order prime to $p$. 
\label{propprimetop}
\end{proposition}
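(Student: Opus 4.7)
The key structural ingredient is that $|\Delta|$ is prime to $p$ and $J = \widehat{\mathbb{Z}_p^{\mathrm{ur}}}$ contains every root of unity of order prime to $p$. Hence $J[\Delta]$ admits the Wedderburn decomposition
\[
J[\Delta] \;\cong\; \prod_{\rho} M_{n_\rho}(J),
\]
where $\rho$ runs over (isomorphism classes of) irreducible $\mathbb{C}_p$-representations of $\Delta$. Tensoring over $J$ with the commutative ring $J[H]$ gives $\tilde\Lambda = J[G] \cong \prod_\rho M_{n_\rho}(J[H])$, and Morita invariance of $K_1$ together with commutativity of $J[H]$ yields
\[
K_1(\tilde\Lambda) \;\cong\; \prod_{\rho} J[H]^\times.
\]
Thus to produce $\epsilon_{0,\Lambda}(K, \Lambda^\varhash, \psi)$ it suffices, for each irreducible $\rho$ of $\Delta$, to specify a single unit $\epsilon_\rho \in J[H]^\times$.

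For the construction I would apply Brauer's induction theorem to write $\rho = n_\rho \cdot 1 + \sum_i n_i \operatorname{Ind}_{D_i}^\Delta (\chi_i - 1)$ with each $D_i$ cyclic and each $\chi_i$ a one-dimensional character of $D_i$. Since $D_i \times H$ is then finite abelian, the already-proved abelian case of theorem \ref{theoremspecialcase} supplies the element $\eta_i := \epsilon_{0, \mathbb{Z}_p[D_i \times H]}(K_{D_i}, \mathbb{Z}_p[D_i \times H]^\varhash, \psi \circ \mathrm{Tr}_{K_{D_i}/K}) \in J[D_i\times H]^\times$, where $K_{D_i} := L^{D_i \times H}$. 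Remark \ref{remark} (taking $D_i$ in the role of ``$\Delta$'' there) extracts $\chi_i(\eta_i) \in J[H]^\times$, and I would define $\epsilon_\rho$ as the product of the $\chi_i(\eta_i)^{n_i}$ weighted by the appropriate $n_\rho$-th power of the trivial-representation epsilon and corrected by Deligne $\lambda$-factors $\lambda(K_{D_i}/K,\psi,dx,dx_{K_{D_i}})^{-n_i}$, mirroring the formula for $\epsilon$ of a general virtual representation displayed at the close of section \ref{sectionrecall}.

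Well-definedness (independence of the chosen Brauer decomposition) is verified character-by-character: after evaluation at any character $\chi$ of $H$, multiplicativity of Deligne--Langlands $\epsilon$ under induction combined with property $(v')$ in the abelian case shows that $\chi(\epsilon_\rho)$ equals the complex epsilon constant $\epsilon_{0,\mathbb{C}_p}(K, V_\rho \otimes \chi, \psi, dx)$, which is unambiguously defined by the Langlands--Deligne theorem. The injection $J[H]^\times \hookrightarrow \prod_\chi J[\chi]^\times$ of Izychev--Venjakob (corollary 2.44, invoked for the abelian case) then ensures $\epsilon_\rho$ itself is well-defined. Property $(v')$ for the assembled $\epsilon_0$ holds by design; $(iii')$, $(iv')$ and $(vi')$ reduce componentwise to the analogous identities already verified in the abelian case; and $(ii')$ follows from naturality of the Wedderburn decomposition under quotient maps $G \twoheadrightarrow G'$.

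The principal obstacle is \emph{integrality}: one must verify that the product defining $\epsilon_\rho$ really lies in the unit group of the integral ring $J[H]$ rather than merely in its rationalisation $\widehat{\mathbb{Q}_p^{\mathrm{ur}}}[H]^\times$. This is a variant of the classical integrality problem for Galois Gauss sums solved in the tame case by Taylor; here I expect it to reduce to the integrality already established for the abelian inputs $\eta_i$ together with that of the Deligne $\lambda$-factors. A secondary technical issue is that the cyclic Brauer subgroups $D_i \le \Delta$ are in general not normal, so the $K_1$-theoretic ``induction'' has to be read through the Wedderburn components rather than implemented as a genuine restriction--corestriction morphism.
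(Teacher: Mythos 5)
Your proposal matches the paper's proof essentially step for step: both use the isomorphism $K_1(J[\Delta\times H])\cong\prod_{\rho\in R(\Delta)}J[H]^{\times}$ (the paper cites remark~\ref{remark} componentwise, which is the same thing as your Wedderburn/Morita argument), both feed the one-dimensional components via Brauer induction through the already-proved abelian case, and both verify well-definedness character-by-character using the injection $J[H]^{\times}\hookrightarrow\prod_{\chi}J[\chi]^{\times}$ together with the Langlands--Deligne theorem.

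The two ``obstacles'' you flag at the end are in fact non-issues here, and it is worth seeing why. First, integrality: the Brauer subgroups $D_i$ sit inside $\Delta$, whose order is prime to $p$, so the values of each $\chi_i$ are roots of unity of order prime to $p$ and therefore already lie in $J=\widehat{\mathbb{Z}_p^{\mathrm{ur}}}$; consequently $\chi_i(\eta_i)\in J[\chi_i][H]^{\times}=J[H]^{\times}$, and the defining product is visibly a unit of the integral ring with no Taylor-style integrality argument needed. Second, the $\lambda$-factors you propose inserting are unnecessary: the paper phrases the Brauer decomposition as $\rho-n_\rho\cdot 1=\sum_i n_i\,\mathrm{Ind}_{K_i/K}(\chi_i-1)$, so each induced piece has dimension zero and property~(6) of section~\ref{sectionrecall} gives inductivity on the nose. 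Working with $\chi_i-1$ rather than $\chi_i$ (i.e.\ dividing each $\chi_i(\eta_i)$ by the trivial-character evaluation $1(\eta_i)$) is exactly what makes the $\lambda$-corrections disappear; your sketch gestures at the right formula (``mirroring the formula\ldots at the close of section~\ref{sectionrecall}'') but the explicit $\lambda^{-n_i}$ you mention should not be there. Finally, non-normality of the $D_i$ is harmless because the construction never needs a restriction--corestriction morphism on $K_1$: one only uses the explicit scalar formula together with the gluing isomorphism~(\ref{equationgroupringdecomposition}).
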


\begin{proof} Use the isomorphism
\begin{equation}
K_1(J[\Delta \times H]) \xrightarrow{\sim} \prod_{\rho \in R(\Delta)} K_1(J[H]) \cong \prod_{\rho \in R(\Delta)} J[H]^{\times},
\label{equationgroupringdecomposition}
\end{equation}
where the map in each component is the one in remark \ref{remark}. The expression $\epsilon_{0,\mathbb{Z}_p[\rho][H]}(K,T \otimes {\rho}, \psi)$ makes sense when $\rho$ is a one dimensional character. If not, then let $n_{\rho}$ be the dimension of $\rho$. We write $\rho - n_{\rho} \cdot 1 = \sum_{i} n_i Ind_{K_i/K}(\chi_i-1)$ and define 
\begin{align*}
& \epsilon_{0,\mathbb{Z}_p[\rho][H]}(K,T\otimes \rho, \psi) := \\
& \epsilon_{0,\mathbb{Z}_p[H]}(K,T\otimes 1, \psi)^{n_{\rho}} \prod_i \epsilon_{0,\mathbb{Z}_p[\chi_i][H]}(K_i, T\otimes (\chi_i-1), \psi \circ Tr_{K_i/K})^{n_i}.
\end{align*}
This is well-defined because the Deligne-Langlands constants are well-defined and the map 
\[
J[H]^{\times} \rightarrow \prod_{\chi \in R(H)} J[\chi]^{\times}
\] 
is injective. We define $\epsilon_{0,\mathbb{Z}_p[G]}(K, J[G], \psi)$ as the inverse image of the element $(\epsilon_{0,\mathbb{Z}_p[\rho][H]}(K,T \otimes \rho, \psi))_{\rho}$ under isomorphism in equation (\ref{equationgroupringdecomposition}). We now verify that $\epsilon_0$ satisfies all the conditions in theorem \ref{theoremspecialcase}. Condition $(v')$ is satisfied by the construction we have given. Condition $(ii')$ is implied by $(v')$. Other conditions are easy to verify. 
\end{proof}

\subsubsection{\bf $G$ is $p$-elementary} The aim of this paragraph is to prove the following 

\begin{proposition} Theorem \ref{theoremspecialcase} holds when $G = H \times \Delta$, where $\Delta$ is a finite cyclic group of order prime to $p$ and $H$ is a finite $p$-group.
\label{proppgroups}
\end{proposition}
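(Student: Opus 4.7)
The plan is to reduce existence to a coherence check across abelian subquotients. Since $G=H\times\Delta$ is finite, $\tilde{\Lambda}=J[G]$, and by the Izychev--Venjakob injection
\[
\prod_{\rho\in R(G)}\rho: K_1(J[G])\hookrightarrow\prod_{\rho}J[\rho]^{\times}
\]
it is enough to produce \emph{any} element of $K_1(J[G])$ whose $\rho$-value equals $\epsilon_{0,\mathbb{C}_p}(K,V_{\rho},\psi,dx)$ for every $\rho\in R(G)$; property $(v')$ will then hold by construction and $(ii')$ will follow because the $K_1$ of the Galois-quotient ring receives the same family of character values.

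The first step is to exploit that $H$ is a $p$-group, so every irreducible $\mathbb{C}_p$-representation of $G$ is monomial: it has the shape $\mathrm{Ind}_{U\times\Delta}^{G}(\chi)$ for some $U\leq H$ and some one-dimensional character $\chi$ of $U\times\Delta$. The subgroup $U\times\Delta$ is abelian (and of the mixed form treated in Proposition~\ref{propprimetop}, which is itself a consequence of the finite abelian case), so Lemmas \ref{lemmaprojection} and \ref{lemmaevaluation} give us $\epsilon_{0,\mathbb{Z}_p[\chi][U\times\Delta]}(K^{U\times\Delta},\chi,\psi\circ\mathrm{Tr})$ with the right evaluation at $\chi$. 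Using inductivity of Deligne--Langlands constants (property 6 of Section~\ref{sectionrecall}), together with the $\lambda$-factor, one assembles a candidate tuple $(\epsilon_{\rho})_{\rho\in R(G)}\in\prod_{\rho}J[\rho]^{\times}$ whose $\rho$-component is forced to be $\epsilon_{0,\mathbb{C}_p}(K,V_{\rho},\psi,dx)$.

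The second step is to show that this tuple lies in the image of $K_1(J[G])$. Here I would invoke Theorem~\ref{theoremk1} from the appendix, whose key ingredient is the map $\beta_{G,J}$ dual to Brauer induction: the image of $K_1(J[G])$ inside $\prod_{\rho}J[\rho]^{\times}$ is cut out by explicit congruences between values at irreducible representations obtained by inducing one-dimensional characters of the abelian subgroups $U\times\Delta$. Existence thus reduces to checking these congruences for the Gauss-sum tuple produced in the previous paragraph.

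The main obstacle is exactly the verification of these congruences, i.e.\ a family of identities of the form ``Gauss sum at the big abelian subquotient $\equiv$ (transferred) Gauss sum at the smaller one $\pmod{\text{controlled ideal}}$''. Rather than attempting a direct multiplicative comparison, the plan is to pass to the integral logarithm of Oliver--Taylor in the Chinburg--Pappas--Taylor formulation, which converts the target multiplicative congruence into an additive statement involving traces of Gauss sums and Frobenius twists; this additive statement can then be checked by direct character-sum manipulations on the explicit formula for $\epsilon_0$ written down in the abelian subsection. Once the coherence is established and the element $\epsilon_{0,\mathbb{Z}_p[G]}(K,\mathbb{Z}_p[G]^{\varhash},\psi)\in K_1(J[G])$ is produced, properties $(iii')$, $(iv')$ and $(vi')$ follow immediately from the corresponding identities at each abelian subquotient (which were already proved in the abelian case), while $(ii')$ follows from $(v')$ via the injectivity statement recalled above.
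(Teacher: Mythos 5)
Your high-level strategy (use the explicit abelian epsilon constants as building blocks, verify a family of compatibilities, and pull back along a description of $K_1(J[G])$) matches the spirit of the paper's proof, but there are two concrete gaps.

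First, you miss the essential structural reduction. Theorem~\ref{theoremk1} is stated and proved only for groups of the form $G = (N\rtimes\Gamma)\times\Delta$ with $N,\Gamma$ cyclic $p$-groups, not for $G=H\times\Delta$ with $H$ an arbitrary finite $p$-group. Before invoking the appendix, the paper first observes that $L^{\Delta}/K$ is tamely ramified because the residue characteristic $l$ is prime to $p$, hence $H=\mathrm{Gal}(L^{\Delta}/K)$ is cyclic-by-cyclic (inertia is cyclic, its quotient is cyclic), and after replacing $L$ by a finite extension one may assume the semidirect product is split. Your proposal treats $H$ as a generic $p$-group, so as written you would be applying Theorem~\ref{theoremk1} outside its domain of validity. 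This is not a cosmetic point: describing $K_1(J[G])$ by congruences for an arbitrary finite $p$-group $H$ is a substantially harder problem, and the paper's congruence set M1--M3 has no meaning without the fixed chain $G_i = (N\rtimes\Gamma^{p^i})\times\Delta$.

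Second, your description of the pullback step is off target. You propose to work in $\prod_{\rho}J[\rho]^{\times}$ and characterize the image of $K_1(J[G])$ there by congruences between induced characters. But $\theta_{G,J}$ in Theorem~\ref{theoremk1} maps into $\prod_{i=0}^n J[G_i^{ab}]^{\times}$, the group rings of the abelianizations of the specific chain of subgroups $G_i$, and the conditions M1--M3 (norm/projection compatibility, conjugation invariance, and the congruence $x_i \equiv \mathrm{ver}(x_{i-1}) \pmod{T_i}$) are formulated at that level, not as identities among evaluations at irreducible representations. The paper therefore feeds in the \emph{abelian} epsilon constants $(-1)^{[K_i:K]-1}\epsilon_{0,\mathbb{Z}_p[G_i^{ab}]}(K_i,\cdot,\psi_i)$, checks M1--M3 directly (M1 and M2 by formal properties of Deligne--Langlands constants, M3 via the explicit Gauss-sum formula and a $\Gamma$-orbit argument), then deduces $(v')$ afterwards via Serre's Proposition~25. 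Your order of operations is inverted: you first build the full tuple $(\epsilon_\rho)_\rho$ and then try to check it lies in the image, which would require a characterization of the image inside $\prod_\rho J[\rho]^\times$ that the paper does not provide. A minor but related slip: for $U\leq H$ a subgroup of a $p$-group, $U\times\Delta$ need not be abelian, so the step reducing to the abelian case as you phrase it is not automatic.

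The integral-logarithm idea you sketch is indeed how the congruence-type description M3 is ultimately obtained, but in the paper that machinery is already packaged into Theorem~\ref{theoremk1}; the proof of Proposition~\ref{proppgroups} itself only needs to verify M1--M3 for the concrete Gauss-sum tuple. If you insert the tame-ramification reduction and switch the target from $\prod_\rho J[\rho]^\times$ to $\prod_i J[G_i^{ab}]^\times$, your proposal becomes essentially the paper's argument.
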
 

The extension $L^{\Delta}/K$ is tamely ramified. Let $F$ be the maximal unramified extension of $K$ inside $L^{\Delta}$. Then $N:= Gal(L^{\Delta}/F)$ is cyclic normal subgroup of $H$ and the quotient $Gal(F/K)$ is also cyclic. We say that the extension $L^{\Delta}/K$ is split if $Gal(L^{\Delta}/K)$ is a semi direct product of $G/N$ and $N$. Replacing $L^{\Delta}$ by a finite extension, if necessary, we may assume that $L^{\Delta}/K$ is a split extension of $p$-power degree. By taking $L$ to be the compositum of $L^{H}$ and $L^{\Delta}$, we may assume that $G = H \times \Delta$, with $H$ a semi direct product of two cyclic groups, say $N$ and $\Gamma$, of $p$-power order. Subsection \ref{subsectiondescriptionofk1} of the appendix describes $K_1(\tilde{\Lambda})$. We refer to the appendix for all unexplained notation in the remaining of this subsection. 
 
\noindent Let us denote the field $L^{G_i}$ by $K_i$. The key theorem of this section and indeed this article is 
\begin{theorem} $((-1)^{[K_i:K]-1}\epsilon_{0,\mathbb{Z}_p[G_i^{ab}]}(K_i,\mathbb{Z}_p[G_i^{ab}]^{\varhash}, \psi \circ Tr_{K_i/K}))$ satisfies M1-M3.
\label{theorempgroups}
\end{theorem}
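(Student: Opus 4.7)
The plan is to verify conditions M1, M2, M3 from the appendix one at a time for the tuple $(\eta_i)_i := ((-1)^{[K_i:K]-1}\epsilon_{0,\mathbb{Z}_p[G_i^{ab}]}(K_i,\mathbb{Z}_p[G_i^{ab}]^{\varhash}, \psi \circ Tr_{K_i/K}))_i$, since M1--M3 together characterise the image of $K_1(\tilde\Lambda)$ in $\prod_i J[G_i^{ab}]^\times$. The overall strategy is to use the explicit formula for $\epsilon_0$ on abelian quotients from the previous subsection, together with properties (6) and (7) (inductivity in degree zero, unramified-twist formula) of the Deligne--Langlands constants, and finally squeeze the congruence condition M3 out of the explicit Gauss sum expression.

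For M1 (projection/norm compatibility between different $G_i^{ab}$'s) the plan is to reduce to two kinds of compatibilities: when $G_j \subseteq G_i$ with the same $K_j = K_i$ but a smaller abelian quotient, the compatibility is exactly Lemma \ref{lemmaprojection}, as the projection on the $K_1$ side just corresponds to factoring $L/K_i$ through a smaller extension. When $G_j \subsetneq G_i$ with $K_j \supsetneq K_i$, the comparison is between $\epsilon_0$ for $K_j$ and $\epsilon_0$ for $K_i$ base-changed along the Weil-group induction; here I would invoke property (6), controlling the induction via the $\lambda$-invariant $\lambda(K_j/K_i, \psi\circ Tr, dx, dy)$, and use the formula $\lambda = (-1)^{[K_j:K_i]-1}$ for unramified abelian pieces (mentioned at the end of property (6)). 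This is precisely where the sign $(-1)^{[K_i:K]-1}$ enters: when the sign factors are carried along, the $\lambda$-invariant contribution cancels against them multiplicatively, leaving the required compatibility.

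M2 (conjugation invariance under the $G$-action on the indexing set of abelian subquotients) is the easiest: the tuple entry $\eta_i$ depends only on the extension $K_i/K$ together with the induced additive character $\psi \circ Tr_{K_i/K}$ and the abstract Galois group $G_i^{ab}$. Conjugation by an element of $G$ identifies these data for conjugate subgroups $G_i$ and $gG_ig^{-1}$ in a way that intertwines with the corresponding identification of the epsilon constants. Since the right-hand side is intrinsic to the local field datum, M2 follows formally from functoriality of the Deligne construction under Galois conjugation.

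The main obstacle will be M3, the congruence condition coming from the kernel of the Chinburg--Pappas--Taylor integral logarithm. My plan is to write M3 explicitly as a relation among the traces/norms of the $\eta_i$'s in suitable quotients of $J[G_i^{ab}]^\times$, and then substitute the explicit Gauss-sum expression for $\epsilon_0$ from the finite abelian case. The hoped-for congruence should reduce to a statement about Gauss sums $\sum_u \chi(c_a u^{-1})\psi(uc_a^{-1})$ attached to different abelian quotients, compared after applying the relevant $\mathrm{rec}$-maps. I expect the key identity to come from Stickelberger-type relations and the congruence
\[
\sum_{u \in v\,(\mathrm{mod}\ \pi^{a})} \chi(\mathrm{rec}(c_{a-i} u^{-1})) \equiv 0 \pmod{\text{suitable power of } p}
\]
for nontrivial $\chi$, in analogy with the vanishing argument already used in Lemma \ref{lemmaevaluation}. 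Controlling the error terms (i.e. isolating exactly the $p$-adic valuation demanded by M3, rather than outright vanishing) is where the real work lies, and is the step where a map $\beta_{G,J}$ dual to Brauer induction referenced in the introduction must enter to package the Gauss-sum relations into the correct congruences.
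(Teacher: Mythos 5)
Your plan for M1 and M2 is on the right track and broadly matches the paper: M1 is handled by evaluating both sides at representations of $H_j/[H_i,H_i]\times\Delta$ and invoking inductivity in degree zero together with the unramified $\lambda$-invariant formula, and M2 is formal Galois-conjugation functoriality. One small reorientation: in the paper's setup $K_i \subseteq K_j$ always for $i\le j$, and $\pi_{i,j}$ (projection, handled by Lemma \ref{lemmaprojection}) and $Nr_{i,j}$ (norm/induction) are the two maps feeding into the single M1 equation $Nr_{i,j}(x_i)=\pi_{i,j}(x_j)$; your case split ``$K_j=K_i$'' versus ``$K_j\supsetneq K_i$'' does not quite match this structure, though the ingredients you name are the right ones.

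The real gap is in your plan for M3. You expect the congruence $x_i \equiv ver(x_{i-1}) \pmod{T_i}$ to come from Stickelberger-type vanishing of partial Gauss sums of the form $\sum_{u\equiv v}\chi(rec(c_{a-k}u^{-1}))$, with the $\beta_{G,J}$ map packaging those relations. That is not how the argument goes, and a vanishing-based approach would struggle here because M3 is a congruence modulo the trace ideal $T_i$, not a divisibility statement about sums of roots of unity. The paper's proof instead exploits the $\Gamma$-action on the index set $O_{K_i}^\times/(1+\pi^a)$ of the Gauss sum: it partitions the sum into $\Gamma$-orbits, observes that an orbit of full length $p^i$ automatically contributes an element of $T_i$, and for shorter orbits (those descending to some $O_{K_t}^\times$ with $t<i$) matches them term-by-term with the corresponding orbits on the $ver(x_{i-1})$ side, using only the elementary congruence $\psi_i(w^{-1})\equiv\psi_{i-1}(w^{-1})\pmod{p^{i-t}}$ on additive character values and the interaction of $rec$ with $ver$. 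There is no appeal to Stickelberger, and $\beta_{G,J}$ plays no role in this verification --- it only enters in the appendix proof of Theorem \ref{theoremk1}, i.e.\ in establishing that M1--M3 characterise the image of $\theta_{G,J}$, not in checking that this particular tuple satisfies them. Your proposal as written would stall at M3 without the orbit-matching idea.
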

\begin{remark}Recall that $\lambda(K_i/K,\psi, dx_K, dx_{K_i}) = (-1)^{[K_i:K]-1}$ as $K_i/K$ is unramified. 
\end{remark}
\begin{proof} For this proof let us denote $(-1)^{[K_i:K]-1}\epsilon_{0, \mathbb{Z}_p[G_i^{ab}]}(K_i,\mathbb{Z}_p[G_i^{ab}]^{\varhash}, \psi \circ Tr_{K_i/K}))$ by $\epsilon_i$ and $\psi \circ Tr_{K_i/K}$ by $\psi_i$. Conditions M1 and M2 follow from formal properties of the epsilon constants. Let us briefly explain this for M1. Let $\rho$ be a representation of $H_j/[H_i,H_i] \times \Delta$. Then evaluation of $\pi_{i,j}(\epsilon_j)$ at $\rho -n_{\rho}$ is 
\[
(\rho - n_{\rho})(\pi_{i,j}(\epsilon_j)) = \epsilon_{0,\mathbb{C}_p}(K_j, \rho-n_{\rho}, \psi_j).
\]
On the other hand, 
\begin{align*}
(\rho -n_{\rho})(Nr_{i,j}(\epsilon_i)) & = Ind(\rho-1)(x_i) \\
& = \epsilon_{0,\mathbb{C}_p}(K_i, Ind(\rho-n_{\rho}), \psi_i) \\
& = \epsilon_{0,\mathbb{C}_p}(K_j, \rho-n_{\rho}, \psi_j) 
\end{align*}
Moreover, the values of $Nr_{i,j}(\epsilon_i)$ and $\pi_{i,j}(\epsilon_j)$ at the trivial representation of $H_j/[H_i,H_i] \times \Delta$ is the same, namely $(l^{d[K_i:K]n(\psi)})^{[K_j:K_i]} = l^{d[K_j:K]n(\psi)}$. Hence $Nr_{i,j}(\epsilon_i) = \pi_{i,j}(\epsilon_j)$. 

Now let us prove M3. Denote the order of the residue field of $K_i$ by $l^{d_i}$. As $K_i/K$ is unramified, $\pi$ is a uniformiser in $K_i$. It is enough to prove that for all $0 \leq k \leq a-1$
\begin{align*}
& \sum_{u \in O_{K_i}^{\times} \text{mod } (\pi^a)} rec(u^{-1}c_{a-k}) \psi_i(uc_{a-k}^{-1}) \\
 \equiv & \sum_{v \in O_{K_{i-1}}^{\times} \text{ mod } (\pi^a)} ver(rec(v^{-1}c_{a-k})) \psi_{i-1}(vc_{a-k}^{-1}) (\text{mod }T_i),
\end{align*}
since $l^{d_i(n(\psi)-k)} = l^{p^id(n(\psi)-k)} \equiv l^{p^{i-1}d(n(\psi)-k)} = l^{d_{i-1}(n(\psi)-k)} (\text{mod } p^i)$ ({\bf Warning:} one has to be slightly careful when $p=2$ and $i=1$ in which case the right hand side in above congruence should be replaced by its minus. But in this case we need to show congruence modulo 2 and hence the minus sign can be ignored). The congruence in the above displayed formula is easily verified as we now show. Let $t$ be the smallest positive integer such that $\Gamma^{p^t}$ acts trivially on $u^{-1}c_{a-k}$. It is clear that $t \leq i$. If $t=i$, then the $\Gamma$-orbit of $rec(u^{-1}c_{a-k})\psi_i(uc_{a-k}^{-1})$ lies in the ideal $T_i$. If $t <i$, then there is unique $w \in O_{K_t}^{\times}/1+(\pi^a)$ such that $w = u^{-1}c_{a-k} (\text{mod } \pi^a)$. Note that $\psi_{i-1}(w^{-1}) \equiv \psi_i(w^{-1}) (\text{mod }p^{i-t})$. Therefore the $\Gamma$-orbit of $rec(w)\psi_i(w^{-1}) - ver(rec(w))\psi_{i-1}(w^{-1})$ lies in the ideal $T_i$ and the congruence is verified. 
\end{proof}  

\begin{proof}[of Proposition \ref{proppgroups}] The above theorem together with theorem \ref{theoremk1} of the appendix gives an element 
\[
\epsilon_0:= \epsilon_{0,\mathbb{Z}_p[G]}(K, \mathbb{Z}_p[G]^{\varhash}, \psi) \in K_1(J[G])
\] 
which is the inverse image of $((-1)^{K_i:K]-1}\epsilon_{0,\mathbb{Z}_p[G_i]}(K_i, \mathbb{Z}_p[G_i^{ab}]^{\varhash}, \psi_i))_i$ under the map $\theta_{G,J}$. Let us verify that $\epsilon_0$ satisfies condition ($v'$). If $\rho$ is an irreducible representation of $G$, then by proposition 25 Serre \cite{Serre:representationtheory}, there exists one dimensional representation $\chi_i$ of $G_i$ such that $\rho = Ind_{G_i}^G\chi_i$. Then 
\begin{align*}
\rho(\epsilon_0) & = \chi_i((-1)^{[K_i:K]-1}\epsilon_{0, \mathbb{Z}_p[G_i^{ab}]}(K_i, \mathbb{Z}_p[G_i^{ab}]^{\varhash}, \psi_i)) \\
& = (-1)^{[K_i:K]-1} \epsilon_{0, \mathbb{C}_p}(K_i, \chi_i, \psi_i, dx_{K_i}) \\
& = \epsilon_{0, \mathbb{C}_p}(K, \rho, \psi, dx),
\end{align*}
since the invariant $\lambda(K_i/K, \psi, dx, dx_{K_i})=(-1)^{[K_i:K]-1}$ as $K_i/K$ is unramified. Condition $(ii')$ is implied by $(v')$. Conditions $(iii')$, $(iv')$ and $(vi')$ are easily verified.    
\end{proof}

\subsubsection{\bf $G$ is $Frac(J)$-elementary} In this paragraph $G$ is a $Frac(J)$-elementary group which we define now.

\begin{definition} Let $q$ be a prime and $\Omega$ be a field. A finite group $U$ is called $q$-$\Omega$-elementary if it is of the form $U = C \rtimes P$, where $P$ is a $q$-group and $C$ is a finite cyclic group of order prime to $q$ and $Im(U \rightarrow Aut(C)) \subset Gal(K(\mu_{|C|})/K) \subset Gal(\mathbb{Q}(\mu_{|C|})/\mathbb{Q}) \cong Aut(C)$. A finite group $U$ is called  $\Omega$-elementary if it is $q$-$\Omega$-elementary for some prime $q$.
\end{definition}

From now on we denote $Frac(J)$ by $\Omega$. Assume that $G$ is $q$-$\Omega$-elementary for some prime $q \neq p$. Let $G = C \rtimes U$, where $U$ is a $q$-group and $C$ is a cyclic group of order prime to $q$. Write $C = C_p \times C'$, where $C_p$ is a $p$-part and $C'$ is a prime to $p$-part. Define $U' = C' \rtimes U$. Therefore we can write $G = C_p \rtimes U'$. Put $\overline{C} := H_0(U',C_p)$ and $U_1 := Ker(U' \rightarrow Aut(C_p))$. Let $G_0 = \overline{C} \times U'$ and $G_1 = C_p \times U_1$
\[
\theta_G: K_1(J[G]) \rightarrow K_1(J[G_0]) \times K_1(J[G_1])
\]
which is the natural projection in the first factor and the norm map in the second factor. Then it can be proven (using exactly the same proof as that of proposition 4.11 Kakde \cite{Kakde:2012}) that this map is injective and the image consists of all $(x_0,x_1)$ satisfying
\begin{itemize}
\item[M1.] $N(x_0) = \pi(x_1)$, where $N$ is the norm map 
\[
N : K_1(J[G_0]) \rightarrow K_1(J[\overline{C} \times U_1]),
\]
and $\pi$ is the natural projection 
\[
\pi : K_1(J[G_1]) \rightarrow K_1(J[\overline{C} \times U_1])
\]
\item[M2.] $x_1$ is fixed under conjugation by every element of $U'$.
\end{itemize}
Let us denote $\mathbb{Z}_p[G_0]$ by $\Lambda_0$ and $\mathbb{Z}_p[G_1]$ by $\Lambda_1$. Put $K_1 = L^{G_1}$. Note that proposition \ref{propprimetop} gives elements $\epsilon_{0, \Lambda_0}(K, \Lambda_0^{\varhash}, \psi) \in K_1(J[G_0])$ and $\epsilon_{0,\Lambda_1}(K_1, \Lambda_1^{\varhash}, \psi \circ Tr_{K_1/K}) \in K_1(J[G_1])$. We put $\lambda = \lambda(K_1/K, \psi, dx_K, dx_{K_1})$. As $p \ndivides [G:G_1]$, we may consider $\lambda$ as an elements of $J^{\times}$ using the explicit formulae for $\lambda$-invariants and $\epsilon$-constants given in section \ref{sectionrecall}.

\begin{theorem} The pair $(\epsilon_{0,\Lambda_0}(K,\Lambda_0^{\varhash}, \psi), \lambda \epsilon_{0,\Lambda_1}(K_1,\Lambda_1^{\varhash}, \psi\circ Tr_{K_1/K}))$ lies in the image of $\theta_G$.  
\label{theoremqelementary}
\end{theorem}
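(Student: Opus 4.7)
The plan is to invoke the explicit description of $\mathrm{Im}(\theta_G)$ given in the excerpt: the pair $(x_0,x_1):=(\epsilon_{0,\Lambda_0}(K,\Lambda_0^{\varhash},\psi),\,\lambda\,\epsilon_{0,\Lambda_1}(K_1,\Lambda_1^{\varhash},\psi\circ Tr_{K_1/K}))$ lies in $\mathrm{Im}(\theta_G)$ iff it satisfies M1 and M2. Since both $G_0 = \overline{C}\times U'$ and $G_1 = C_p\times U_1$ are of the form (abelian $p$-part) $\times$ (group of order prime to $p$), Proposition~\ref{propprimetop} already supplies the two epsilon constants, and the injectivity of $\prod_{\rho}\rho$ recalled after Theorem~\ref{theoremspecialcase} reduces each of M1 and M2 to an equality of Deligne--Langlands constants over $\mathbb{C}_p$, obtained from property $(v')$ and the formal facts of Section~\ref{sectionrecall}.

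For M1, I would fix an irreducible $\rho\in R(\overline{C}\times U_1)$ and compare $\rho(N(x_0))$ with $\rho(\pi(x_1))$. Set $L_0:=L^{[U',C_p]}$, so that $G_0=\mathrm{Gal}(L_0/K)$ and $\overline{C}\times U_1=\mathrm{Gal}(L_0/K_1)$; in particular $[K_1:K]=[U':U_1]$. Frobenius reciprocity for $K_1$ combined with $(v')$ gives
\[
\rho(N(x_0))=(\mathrm{Ind}_{\overline{C}\times U_1}^{G_0}\rho)(x_0)=\epsilon_{0,\mathbb{C}_p}(K,V_{\mathrm{Ind}\,\rho},\psi,dx),
\]
where $V_{\mathrm{Ind}\,\rho}$ is the $W(\overline{K}/K)$-representation inflated through $G_0$. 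Writing $\tilde\rho:=\rho\circ\pi\in R(G_1)$, the other side is
\[
\rho(\pi(x_1))=\lambda^{\dim\rho}\,\epsilon_{0,\mathbb{C}_p}(K_1,V_{\tilde\rho},\psi\circ Tr_{K_1/K},dx_{K_1}),
\]
and the induction formula (item~(6) of Section~\ref{sectionrecall}) rewrites the right-hand side as $\lambda^{\dim\rho}\cdot\lambda^{-\dim\rho}\,\epsilon_{0,\mathbb{C}_p}(K,\mathrm{Ind}_{K_1/K}V_{\tilde\rho},\psi,dx)$. The two sides of M1 then coincide once we observe that $V_{\mathrm{Ind}\,\rho}$ and $\mathrm{Ind}_{K_1/K}V_{\tilde\rho}$ agree as $W(\overline{K}/K)$-representations: this is the standard compatibility between Weil-group induction from $W(\overline{K}/K_1)$ and Galois inflation, which applies here because $K_1=L_0^{\overline{C}\times U_1}$ and $\tilde\rho$ is the inflation of $\rho$ along $G_1\twoheadrightarrow \overline{C}\times U_1$.

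For M2, the plan is to show $\rho^g(x_1)=\rho(x_1)$ for every $\rho\in R(G_1)$ and every $g\in U'$, where $\rho^g(h):=\rho(ghg^{-1})$. Since $\dim\rho^g=\dim\rho$, the factor $\lambda^{\dim\rho}$ matches automatically, and by $(v')$ it suffices to establish the Galois-invariance
\[
\epsilon_{0,\mathbb{C}_p}(K_1,V_{\rho^g},\psi\circ Tr_{K_1/K},dx_{K_1})=\epsilon_{0,\mathbb{C}_p}(K_1,V_{\rho},\psi\circ Tr_{K_1/K},dx_{K_1}).
\]
Normality of $G_1$ in $G$ (both $C_p$ and $U_1$ being normal in $G$) ensures that $g$ has a well-defined image $\bar g\in\mathrm{Gal}(K_1/K)$; lifting $\bar g$ to some $\tilde g\in\mathrm{Gal}(\overline{K}/K)$, conjugation by $\tilde g$ on $W(\overline{K}/K_1)$ identifies $V_{\rho^g}$ with the pullback of $V_\rho$. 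On the other hand, $\psi\circ Tr_{K_1/K}$ is $\mathrm{Gal}(K_1/K)$-invariant because the trace is, and $dx_{K_1}$ is canonically normalised by $\int_{O_{K_1}}dx_{K_1}=1$; hence the Deligne--Langlands construction is equivariant under this action and the equality follows.

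The main obstacle I anticipate is the careful accounting with the $\lambda$-invariant: $\lambda$ is central in $J[G_1]$, so evaluation at a $\dim\rho$-dimensional $\rho$ yields $\lambda^{\dim\rho}$, and this power must cancel exactly against the $\lambda^{-\dim V}$ produced by the induction formula. A secondary bookkeeping point is the simultaneous identification of $K_1$ as the fixed field of $G_1\subset G$ and of $\overline{C}\times U_1\subset G_0$; this matching is what makes the Weil-induction/Galois-inflation compatibility in M1 transparent. The rest of the argument is a direct appeal to the properties listed in Section~\ref{sectionrecall} and to the cases of Theorem~\ref{theoremspecialcase} already proved in Proposition~\ref{propprimetop}.
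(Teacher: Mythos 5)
Your proof is correct and follows the paper's own approach exactly: the paper's proof of Theorem~\ref{theoremqelementary} is the one-liner ``Similar to the first part of the proof of theorem~\ref{theorempgroups}'', which amounts to checking M1 and M2 by evaluating at irreducible representations, using the injectivity of $\prod_\rho \rho$ together with property $(v')$ and the inductivity of the Deligne--Langlands constants, precisely as you do (including the observation that the $\lambda$-factor in the second component is designed to cancel the $\lambda^{-\dim\rho}$ coming from the induction formula). One cosmetic remark: for M2 the route used implicitly elsewhere in the paper (e.g.\ the conjugation check in the proof of Proposition~\ref{propfinitegroup}) is to combine the induction formula with the isomorphism $\mathrm{Ind}_{K_1/K}V_{\rho^g}\cong\mathrm{Ind}_{K_1/K}V_{\rho}$, which avoids having to invoke Galois-equivariance of $\epsilon_{0,\mathbb{C}_p}$ as a separate fact; your version is still fine, just a touch less self-contained.
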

\begin{proof} Similar to the first part of the proof of theorem \ref{theorempgroups}.  
\end{proof}

Now assume that $G$ is $p$-$\Omega$-elementary. Write $G = C_n \times U$ (since the group $Gal(\Omega(\mu_n)/\Omega)$ is trivial we have direct product),  where $U$ is a $p$-group and $C_n$ is a cyclic group of order $n$ prime to $p$. Then theorem \ref{theoremspecialcase} is known to be true from proposition \ref{proppgroups}.

\begin{proposition} Theorem \ref{theoremspecialcase} holds in the case when $G$ is a finite $\Omega$-elementary group.
\label{propqpelementary}
\end{proposition}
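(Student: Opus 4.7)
My plan is to split into two subcases according to the defining prime $q$ for which $G$ is $q$-$\Omega$-elementary. When $q = p$, the group has the form $G = C_n \times U$ with $U$ a finite $p$-group and $C_n$ cyclic of order $n$ prime to $p$, which is exactly the hypothesis of Proposition \ref{proppgroups}; so in this case nothing further is needed.

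When $q \neq p$, I would write $G = C_p \rtimes U'$ as in the setup preceding Theorem \ref{theoremqelementary} and take as candidate for $\epsilon_{0,\mathbb{Z}_p[G]}(K, \mathbb{Z}_p[G]^{\varhash}, \psi)$ the element $\epsilon_0 \in K_1(J[G])$ produced by that theorem, whose image under the injection $\theta_G$ is
\[
\big(\epsilon_{0,\Lambda_0}(K, \Lambda_0^{\varhash}, \psi),\ \lambda \cdot \epsilon_{0,\Lambda_1}(K_1, \Lambda_1^{\varhash}, \psi \circ Tr_{K_1/K})\big).
\]
I would then verify properties $(ii')$--$(vi')$ for this element.

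The crucial step is property $(v')$. Fix an irreducible $\mathbb{C}_p$-representation $\rho$ of $G$. By Clifford theory applied to the normal subgroup $C_p$, the restriction $\rho|_{C_p}$ consists of a single $U'$-orbit of characters. \textbf{Case (A):} if this orbit is a single $U'$-fixed character, then $\rho$ factors through the quotient $G_0 = \overline{C} \times U'$; hence $\rho(\epsilon_0) = \rho(\epsilon_{0, \Lambda_0}(K, \Lambda_0^{\varhash}, \psi))$, and by Proposition \ref{propprimetop} this equals $\epsilon_{0, \mathbb{C}_p}(K, V_\rho, \psi, dx)$. \textbf{Case (B):} otherwise, $\rho = Ind_{C_p \rtimes S_\chi}^G \widetilde\sigma$ for some character $\chi$ of $C_p$ with stabilizer $S_\chi \subsetneq U'$; since $U_1 \leq S_\chi$ (as $U_1$ acts trivially on all of $C_p$), transitivity of induction gives $\rho = Ind_{G_1}^G \tau$ for $\tau := Ind_{G_1}^{C_p \rtimes S_\chi} \widetilde\sigma$ a representation of $G_1$. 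The norm map $N: K_1(J[G]) \to K_1(J[G_1])$ sends $\epsilon_0$ to $\lambda \cdot \epsilon_{0, \Lambda_1}(K_1, \Lambda_1^{\varhash}, \psi \circ Tr_{K_1/K})$, and using the standard identity $\rho(x) = \tau(N(x))$ one obtains
\[
\rho(\epsilon_0) = \lambda^{\dim \tau}\, \tau\big(\epsilon_{0, \Lambda_1}(K_1, \Lambda_1^{\varhash}, \psi \circ Tr_{K_1/K})\big).
\]
By Proposition \ref{proppgroups} applied to the $p$-elementary group $G_1$ together with the inductivity of Deligne--Langlands epsilon constants (property (6) of Section \ref{sectionrecall}), this expression collapses to $\epsilon_{0, \mathbb{C}_p}(K, \rho, \psi, dx)$, as required.

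Property $(ii')$ then follows from $(v')$ via the Izychev--Venjakob injection recalled just after Theorem \ref{theoremspecialcase}. Properties $(iii')$, $(iv')$ and $(vi')$ are checked componentwise through $\theta_G$ using the corresponding properties of $\epsilon_{0, \Lambda_0}$ and $\epsilon_{0, \Lambda_1}$ already established in Propositions \ref{propprimetop} and \ref{proppgroups}, together with the standard behavior of the $\lambda$-invariant under scaling of $\psi$ and under applying the Frobenius $\varphi_p$. The main obstacle I expect is in case (B): tracking the exponent $\dim\tau$ of $\lambda$ so that it matches precisely the factor $\lambda^{\dim V}$ demanded by the inductivity formula for $V = \tau$, and handling the possibly reducible $\tau$ by invoking additivity of both sides along its Jordan--H\"older constituents.
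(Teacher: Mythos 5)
Your overall plan matches the paper's: handle $q=p$ by Proposition~\ref{proppgroups}, handle $q\neq p$ by taking the element furnished by Theorem~\ref{theoremqelementary}, and verify $(v')$ by a dichotomy on irreducible representations, deducing $(ii')$ from $(v')$ and checking the rest componentwise. However, your Case (B) has a gap. After writing $\rho = Ind_{C_p\rtimes S_\chi}^G\widetilde\sigma$ you claim that transitivity of induction yields $\rho = Ind_{G_1}^G\tau$ with $\tau := Ind_{G_1}^{C_p\rtimes S_\chi}\widetilde\sigma$. This is not well-posed: $\widetilde\sigma$ is a representation of $T:=C_p\rtimes S_\chi$, not of $G_1$, so one cannot induce it from $G_1$ up to $T$; and if you replace this by restriction to $G_1$, the equality $Ind_T^G\widetilde\sigma = Ind_{G_1}^G Res_{G_1}^T\widetilde\sigma$ is simply false in general when $G_1\subsetneq T$. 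Transitivity only reduces $Ind_T^G$ to $Ind_{G_1}^G$ when $T = G_1$.

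The missing ingredient is that in fact $T = G_1$ always holds in Case (B), i.e.\ $S_\chi = U_1$. You correctly observe $U_1\le S_\chi$, but the reverse inclusion needs an argument: since $C_p$ is a cyclic $p$-group, the stabilizer in $Aut(C_p)$ of any nontrivial character $\chi$ of $C_p$ is the subgroup $1+p^k\mathbb{Z}/p^m\mathbb{Z}\subset(\mathbb{Z}/p^m\mathbb{Z})^\times$ (where $p^k$ is the order of $\chi$), which is a $p$-group. As $U'$ has order prime to $p$, the image of $S_\chi$ in $Aut(C_p)$ must therefore be trivial, whence $S_\chi\subseteq U_1$. Once this is in place, $T = G_1$, $\tau = \widetilde\sigma$ is irreducible, and the rest of your Case (B) computation (including the exponent $\dim\tau$ on $\lambda$) goes through; the worry you flagged about reducible $\tau$ evaporates. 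Note also that the paper bypasses the Clifford-theoretic argument you give and simply cites the dichotomy directly (attributing it to Serre, Proposition~25), so what you propose is a genuine verification of that input rather than a different route to the result.
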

\begin{proof} We only need consider the case of $q$-$\Omega$-elementary, for $p \neq q$. Then by theorem \ref{theoremqelementary} there is an element $\epsilon_0 = \epsilon_{0, \Lambda}(K, \Lambda^{\varhash}, \psi)$ in $K_1(J[G])$ which maps to $(\epsilon_{0,\Lambda_0}(K,\Lambda_0^{\varhash}, \psi), \lambda \epsilon_{0,\Lambda_1}(K_1,\Lambda_1^{\varhash}, \psi\circ Tr_{K_1/K}))$ under the map $\theta_G$. Rest of the proof is similar to the proof of proposition \ref{proppgroups}. We just remark that to prove property $(v')$ we again use that any irreducible representation is either inflated from a representation of $G_0$ or is induced from an irreducible representation of $G_1$ (proposition 25 in Serre \cite{Serre:representationtheory}).  
\end{proof}

\subsubsection{\bf $G$ is any finite group} In this paragraph $G$ is any finite group. Then the induction theorems of Dress and Wall (see Dress \cite{Dress:1973}, Oliver \cite{Oliver:1988} theorem 11.2 or Wall \cite{Wall:1974} theorem 5.4) give an isomorphism
\[
K_1(J[G]) \xrightarrow{\sim} \ilim{U} K_1(J[U]),
\]
where $U$ ranges over all $\Omega$-elementary subgroups of $G$ and the inverse limit is with respect to the norm maps and the conjugation maps. We have already constructed $\epsilon_U:=\epsilon_{0,\mathbb{Z}_p[U]}(L^U, \mathbb{Z}_p[U]^{\varhash}, \psi \circ Tr_{L^U/K})$ for each $U$ above. Put $\lambda_U:= \lambda(L^U/K, \psi, dx_K, dx_{L^U})$. We contend that $(\lambda_U\epsilon_U)_U \in \ilim{U} K_1(J[U])$. Let $V=gUg^{-1}$ for $g \in G$ and let $\rho$ be any irreducible representation of $V$. Then
\begin{align*}
\rho(\lambda_V\epsilon_V) & = \lambda_V^{dim(\rho)}\epsilon_{0, \mathbb{C}_p}(L^V, \rho, \psi \circ Tr_{L^V/K}, dx_{L^V}) \\
& = \epsilon_{0, \mathbb{C}_p}(K, Ind_{L^V/K}(\rho), \psi, dx).
\end{align*}
On the other hand, 
\begin{align*}
\rho(g\lambda_U \epsilon_U g^{-1}) & = g^{-1}\rho g (\lambda_U\epsilon_U) \\
& = \lambda_U^{dim(\rho)} \epsilon_{0, \mathbb{C}_p}(L^U, g^{-1}\rho g, \psi \circ Tr_{L^U/K}, dx_{L^U}) \\
& = \epsilon_{0, \mathbb{C}_p}(K, Ind_{L^U/K}(g^{-1} \rho g), \psi, dx) \\
& = \epsilon_{0, \mathbb{C}_p}(K, Ind_{L^V/K}(\rho), \psi, dx),
\end{align*}
where the last equality follows from the fact that $Ind_{L^V/K} (\rho) \cong Ind_{L^U/K} (g^{-1}\rho g)$. Hence $\lambda_V\epsilon_V = g\lambda_U\epsilon_U g^{-1}$. Next let $V$ be a $\Omega$-elementary subgroup of another $\Omega$-elementary subgroup $U$ of $G$. Then we must prove that the norm map $K_1(J[U]) \xrightarrow{Nr} K_1(J[V])$ sends $\lambda_U\epsilon_U$ to $\lambda_V\epsilon_V$. Let $\rho$ be any representation of $V$. Then 
\begin{align*}
\rho(\lambda_V\epsilon_V) & = \lambda_V^{dim(\rho)}\epsilon_{0, \mathbb{C}_p}(L^V, \rho, \psi \circ Tr_{L^V/K}, dx_{L^V}) \\
& = \epsilon_{0, \mathbb{C}_p}(K, Ind_{L^V/K}(\rho), \psi, dx)
\end{align*}
On the other hand
\begin{align*}
\rho(Nr(\lambda_U\epsilon_U)) & = Ind_{L^V/L^U}(\rho)(\lambda_U\epsilon_U) \\
& = \lambda_U^{[L^V:L^U]dim(\rho)} \epsilon_{0,\mathbb{C}_p}(L^U, Ind_{L^V/L^U}(\rho), \psi \circ Tr_{L^U/K}, dx_{L^U}) \\
& = \epsilon_{0, \mathbb{C}_p}(K, Ind_{L^U/K}(Ind_{L^V/L^U}(\rho)), \psi, dx) \\
& = \epsilon_{0, \mathbb{C}_p}(K, Ind_{L^V/K}(\rho), \psi, dx)
\end{align*}
Therefore, $Nr(\lambda_U\epsilon_U) = \lambda_V\epsilon_V$. Hence $(\lambda_U\epsilon_U)_U \in \ilim{U} K_1(J[U])$. This allows us to construct $\epsilon_{0,\mathbb{Z}_p[G]}(K,\mathbb{Z}_p[G]^{\varhash}, \psi)$ as the inverse image of $(\epsilon_U)_U$ under above isomorphism. 

\begin{proposition} Theorem \ref{theoremspecialcase} holds in the case when $G$ is any finite group.
\label{propfinitegroup}
\end{proposition}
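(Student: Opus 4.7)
The plan is to verify that the element $\epsilon_0 := \epsilon_{0,\mathbb{Z}_p[G]}(K,\mathbb{Z}_p[G]^{\varhash},\psi)$, defined in the preceding paragraph via the Dress--Wall isomorphism $K_1(J[G]) \xrightarrow{\sim} \ilim{U} K_1(J[U])$ as the preimage of the compatible tuple $(\lambda_U \epsilon_U)_U$, satisfies the five conditions of theorem \ref{theoremspecialcase}. By construction, the transfer $\mathrm{Nr}^G_U(\epsilon_0)$ equals $\lambda_U \epsilon_U$ for every $\Omega$-elementary subgroup $U$ of $G$, and proposition \ref{propqpelementary} tells us that each $\epsilon_U$ already enjoys the analogues of all the desired properties. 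The core step is $(v')$; the other conditions follow with less effort.

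For $(v')$, I would fix a continuous finite-dimensional $\mathbb{C}_p$-representation $\rho$ of $G$ and apply Brauer's induction theorem (as recalled at the end of section \ref{sectionrecall}) to write
\[
\rho - \dim(\rho)\cdot 1 = \sum_i n_i\, \mathrm{Ind}_{U_i}^{G}(\chi_i - 1),
\]
with $U_i$ an elementary subgroup of $G$ and $\chi_i$ a one-dimensional character of $U_i$. The reciprocity $(\mathrm{Ind}_U^G \xi)(x) = \xi(\mathrm{Nr}^G_U(x))$ for $K_1$ reduces the computation of $\rho(\epsilon_0)$ to evaluating each virtual character $\chi_i - 1$ on $\mathrm{Nr}^G_{U_i}(\epsilon_0) = \lambda_{U_i} \epsilon_{U_i}$. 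Because $\chi_i - 1$ has dimension zero, the scalar $\lambda_{U_i}$ cancels, and property $(v')$ for the elementary group $U_i$ (known from proposition \ref{propqpelementary}) yields
\[
(\chi_i - 1)(\lambda_{U_i} \epsilon_{U_i}) = \epsilon_{0,\mathbb{C}_p}\bigl(L^{U_i}, \chi_i - 1, \psi \circ Tr_{L^{U_i}/K}\bigr).
\]
Substituting these identities back and comparing with the Brauer formula for Langlands--Deligne $\epsilon$-constants at the end of section \ref{sectionrecall} collapses the product to $\epsilon_{0,\mathbb{C}_p}(K,\rho,\psi,dx)$, which is the content of $(v')$.

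For the remaining conditions, $(ii')$ follows from $(v')$ via injectivity of the character map $\prod_\rho \rho : K_1(J[G']) \to \prod_\rho J[\rho]^\times$ of Izychev--Venjakob, exactly as used throughout the preceding subsections. Conditions $(iii')$, $(iv')$, and $(vi')$ are verified one elementary subgroup at a time: each identity is known for the individual $\lambda_U \epsilon_U$ by proposition \ref{propqpelementary}, and the operations involved (multiplication by $rec(c)$ or $rec(p)$, the Frobenius $\varphi_p$, and the unramified specialisation) are compatible with the norm and conjugation maps defining the inverse system, so the identities descend uniquely through the Dress--Wall isomorphism to $\epsilon_0$. The main subtlety I foresee is $(v')$: one must track carefully how the $\lambda$-normalisation $\lambda_U \epsilon_U$ in the inverse system interacts with the induction/restriction bookkeeping of Langlands--Deligne $\epsilon$-factors, though as observed the factor cancels cleanly because the relevant virtual characters $\chi_i - 1$ have dimension zero.
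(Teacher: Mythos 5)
Your proposal is correct in outline and follows the same route the paper takes: construct $\epsilon_0$ as the preimage of the compatible system $(\lambda_U\epsilon_U)_U$ under the Dress--Wall isomorphism and verify $(v')$ by appealing to Brauer's theorem (the paper explicitly says the proof mirrors that of Proposition~\ref{proppgroups} with Proposition~25 of Serre replaced by Theorem~19).

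One small point is worth flagging. You invoke Brauer in the ``dimension-zero'' form $\rho - \dim(\rho)\cdot 1 = \sum_i n_i\,\mathrm{Ind}_{U_i}^G(\chi_i - 1)$ and argue that the $\lambda_{U_i}$'s cancel because $\chi_i - 1$ has dimension zero. This creates a minor detour: having split off $\dim(\rho)\cdot 1$, you still need to know $1_G(\epsilon_0) = \epsilon_{0,\mathbb{C}_p}(K, 1_G, \psi, dx)$, which is itself an instance of $(v')$ for a non-induced representation of the (generally non-elementary) group $G$, so it has to be established by applying the plain Brauer decomposition to $1_G$ anyway. It is simpler --- and is what the paper's reference to Theorem 19 in Serre suggests --- to use Brauer directly, $\rho = \sum_i n_i\,\mathrm{Ind}_{U_i}^G\chi_i$ with $U_i$ elementary and $\chi_i$ one-dimensional, and compute
\[
\rho(\epsilon_0) = \prod_i \chi_i\bigl(\mathrm{Nr}^G_{U_i}(\epsilon_0)\bigr)^{n_i} = \prod_i \bigl(\lambda_{U_i}\,\epsilon_{0,\mathbb{C}_p}(L^{U_i},\chi_i,\psi_{U_i},dx_{L^{U_i}})\bigr)^{n_i} = \prod_i \epsilon_{0,\mathbb{C}_p}\bigl(K,\mathrm{Ind}(\chi_i),\psi,dx\bigr)^{n_i} = \epsilon_{0,\mathbb{C}_p}(K,\rho,\psi,dx),
\]
where $\lambda_{U_i}$ is not canceled but rather absorbed by the inductivity relation (property (6) of the Deligne--Langlands constants) since $\dim\chi_i = 1$. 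This also sidesteps the question of whether the dimension-zero form of Brauer's theorem can always be achieved with elementary subgroups. Your handling of $(ii')$, $(iii')$, $(iv')$, $(vi')$ via the character-map injectivity and compatibility with the inverse system matches the paper's argument.
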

\begin{proof} We must show that the element $\epsilon_{0, \mathbb{Z}_p[G]}(K, \mathbb{Z}_p[G]^{\varhash}, \psi)$ constructed above satisfies all the conditions required by theorem \ref{theoremspecialcase}. The proof is similar to the proof of proposition \ref{proppgroups}. We just remark that instead of propositon 25 in \cite{Serre:representationtheory} we use Brauer's induction theorem (see for example theorem 19 in Serre \cite{Serre:representationtheory}) which ensures that every irreducible representation of $G$ is a $\mathbb{Z}$-linear combination of representations induced from representations of elementary subgroups. 

\end{proof}

\subsection{$G$ is a $p$-adic Lie group} In this case we use the isomorphism of Fukaya-Kato \cite{FukayaKato:2006} proposition 1.5.1
\[
K_1(\Lambda(G)) \cong \ilim{U} K_1(\mathbb{Z}_p[G/U]),
\]
where $U$ runs through open normal subgroups of $G$. We define $K_1'(\mathbb{Z}_p[G/U]):=K_1(\mathbb{Z}_p[G/U])/SK_1(\mathbb{Z}_p[G/U])$, where $SK_1(\mathbb{Z}_p[G/U]):= ker(K_1(\mathbb{Z}_p[G/U]) \rightarrow K_1(\mathbb{Q}_p[G/U]))$. Define $K_1'(\Lambda(G)) := K_1(\Lambda(G))/\ilim{U}SK_1(\mathbb{Z}_p[G/U])$. Consider the diagram
\[
\xymatrix{
1 \ar[r] & \ilim{U} K_1'(\mathbb{Z}_p[G/U]) \ar[r] & \ilim{U}K_1(J[G/U]) \ar[r]^{1 - \varphi_p} & \ilim{U} K_1(J[G/U]) & \\
& K_1'(\Lambda(G)) \ar[u]^{\sim} \ar[r]_{\iota} & K_1(\Lambda_J(G)) \ar[r]_{1 - \varphi_p} \ar[u] & K_1(\Lambda_J(G)) \ar[u] \ar[r] & 1
}
\]
The bottom row is not known to be exact though it is expected to be (see remark 3.4.6 in \emph{loc. cit.} and Izychev-Venjakob \cite{IzychevVenjakob:2010}). We do not know if the second vertical arrow is an isomorphism. Hence the proof of the following proposition uses property (for the first time) $(iv')$. 

\begin{proposition} Theorem \ref{theoremspecialcase} holds when $G$ is any $p$-adic Lie group.
\label{proppadic}
\end{proposition}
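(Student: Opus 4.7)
The plan is to assemble the finite-level constants produced in Proposition \ref{propfinitegroup} into an element of $K_1(\Lambda_J(G))$, relying on property $(iv')$ (used for the first time in the argument) to compensate for the fact that neither the middle vertical arrow of the displayed diagram is known to be an isomorphism nor the bottom row known to be exact.

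First, for each open normal subgroup $U \trianglelefteq G$, Proposition \ref{propfinitegroup} gives $\epsilon_U := \epsilon_{0,\mathbb{Z}_p[G/U]}(K, \mathbb{Z}_p[G/U]^{\varhash}, \psi) \in K_1(J[G/U])$. Property $(ii')$ at the finite level makes $(\epsilon_U)_U$ compatible under the natural projections $K_1(J[G/U']) \to K_1(J[G/U])$ whenever $U' \subseteq U$, and so defines an element $\epsilon_\infty \in \ilim{U} K_1(J[G/U])$. I would then invoke $(iv')$ at each finite level: the identity $\varphi_p(\epsilon_U) = [\mathbb{Z}_p[G/U]^{\varhash}, rec(p)]\,\epsilon_U$ implies that $(1-\varphi_p)(\epsilon_\infty)$ is the image in the top-right $\ilim{U} K_1(J[G/U])$ of the very explicit element $[\Lambda(G)^{\varhash}, rec(p)]^{-1}$ of $K_1'(\Lambda(G)) \cong \ilim{U} K_1'(\mathbb{Z}_p[G/U])$.

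The lifting step would then proceed as follows. Construct by hand an auxiliary $y \in K_1(\Lambda_J(G))$ whose image $\bar y \in \ilim{U} K_1(J[G/U])$ satisfies $(1-\varphi_p)(\bar y) = (1-\varphi_p)(\epsilon_\infty)$, using the explicit sum-of-Gauss-sums formula defining $\epsilon_U$ in the abelian case together with the diagram-chase constructions in the $p$-elementary, $\Omega$-elementary, and general finite cases. Then $\epsilon_\infty \cdot \bar y^{-1}$ lies in the kernel of $1-\varphi_p$ in the exact top row, which is $\ilim{U} K_1'(\mathbb{Z}_p[G/U]) \cong K_1'(\Lambda(G))$ via the left vertical isomorphism. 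Applying $\iota$ lifts it to $K_1(\Lambda_J(G))$, and I would set
\[
\tilde\epsilon := y \cdot \iota\bigl(\epsilon_\infty \bar y^{-1}\bigr),
\]
which by construction maps to $\epsilon_\infty$. Properties $(ii')$--$(vi')$ for $\tilde\epsilon$ then follow by passing to inverse limits of the corresponding properties established at each finite level.

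The main obstacle is the construction of $y$. Because the bottom row is only expected, not known, to be exact, I cannot simply invoke surjectivity of $1-\varphi_p$ on $K_1(\Lambda_J(G))$ to obtain $y$ as a preimage of $\iota\bigl([\Lambda(G)^{\varhash}, rec(p)]^{-1}\bigr)$. Instead one must exploit the precise shape of $(1-\varphi_p)(\epsilon_\infty)$ furnished by $(iv')$, together with the explicit formula for $\epsilon_U$ from the abelian case, to write $y$ down directly at the level of $\Lambda_J(G)^{\times}$; this is exactly where $(iv')$ plays its essential role, and it is the only nontrivial step of the argument.
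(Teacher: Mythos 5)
You have the overall architecture right: assemble the finite-level $\epsilon_U$, use property $(iv')$ to twist by an element $f$ killing the Frobenius-twist, land in the exact top row to reach $\ilim{U} K_1'(\mathbb{Z}_p[G/U]) \cong K_1'(\Lambda(G))$, and push back up with $\iota$. This is the paper's strategy.

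The genuine gap is in your treatment of the lift. You claim you ``cannot simply invoke surjectivity of $1-\varphi_p$ on $K_1(\Lambda_J(G))$'' because the bottom row is not known to be exact, and you therefore propose to write down the auxiliary element $y$ ``by hand'' from the sum-of-Gauss-sums formula. But this conflates two different things. What is not known about the bottom row is \emph{exactness at the middle term} (i.e.\ whether $\ker(1-\varphi_p)$ equals $\mathrm{im}(\iota)$), and that is precisely the failure the paper routes around by going up to the top row. The \emph{surjectivity} of $1-\varphi_p$ on $K_1(\Lambda_J(G))$ \emph{is} known --- it is Proposition 3.4.5 of Fukaya--Kato, which is exactly what the paper cites. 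That surjectivity hands you $f \in K_1(\Lambda_J(G))$ with $\varphi_p(f) = [\Lambda(G)^{\varhash}, rec(p)]\,f$ at once, with no explicit formula needed. Once you have $f$, its images $f_U$ satisfy $\varphi_p(f_U) = [\mathbb{Z}_p[G/U]^{\varhash}, rec(p)]\,f_U$ by commutativity of the diagram, so $(iv')$ gives $\varphi_p(\epsilon_U f_U^{-1}) = \epsilon_U f_U^{-1}$, the tuple $(\epsilon_U f_U^{-1})_U$ sits in the kernel of $1-\varphi_p$ in the top row, hence in $\ilim{U} K_1'(\mathbb{Z}_p[G/U]) \cong K_1'(\Lambda(G))$, and one sets $\epsilon_{0,\Lambda(G)}(K,\Lambda(G)^{\varhash},\psi) := \iota(u)\,f$.

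Your proposed alternative --- constructing $y$ directly from explicit Gauss-sum formulas across the $p$-elementary, $\Omega$-elementary, and general finite cases --- is not carried out and is not a viable substitute: once you pass beyond the abelian case the $\epsilon_U$ are defined only implicitly as preimages under the injections $\theta_{G,J}$ or the Dress--Wall isomorphism, so there is no closed formula to exploit. The step you flag as ``the only nontrivial step'' is in fact a non-step: Fukaya--Kato already supplies the surjectivity, and the nontrivial content of the proposition lies instead in verifying property $(v')$ for representations not factoring through a finite quotient, which one handles by twisting with an unramified character and using property (7) of the Deligne--Langlands constants --- a point your proposal does not address.
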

\noindent{\bf Proof:} The surjection of $1-\varphi_p$ in the bottom row (see proposition 3.4.5 Fukaya-Kato \cite{FukayaKato:2006}) implies that there is an element $f \in K_1(\Lambda_J(G))$ such that $(1-\varphi_p)(f) = [\Lambda(G)^{\varhash}, rec(p)]$. Denote the image of $f$ under the second vertical arrow by $(f_U)_U$. Then $(1-\varphi_p)(f_U) = [\mathbb{Z}_p[G/U]^{\varhash}, rec(p)]$. Hence 
\[
(\epsilon_{0, \mathbb{Z}_p[G/U]}(K, \mathbb{Z}_p[G/U]^{\varhash}, \psi) f_U^{-1})_U \in \ilim{U} K_1'(\mathbb{Z}_p[G/U]).
\]
Let $u$ be the inverse image of this element in $K_1'(\Lambda(G))$. Define 
\[
\epsilon_{0, \Lambda(G)}(K, \Lambda(G)^{\varhash}, \psi) = \iota(u)f \in K_1(\Lambda_J(G)).
\]
The proof that this element satisfies all the required properties is similar (in fact easier) to the proof at the end of section \ref{lastsection}. We only make the following remark about property $(v')$. If $\rho$ is a representation of $G$ factoring through a finite quotient $G/U$, then $(v')$ can be verified directly using $(v')$ for the element $\epsilon_{0, \mathbb{Z}_p[G/U]}(K, \mathbb{Z}_p[G/U]^{\varhash}, \psi)$. More generally we use the fact that any finite dimensional continuous representation of $W(\overline{K}/K)$ can be twisted by one dimensional unramified representation to get a representation that factors though a finite quotient of $W(\overline{K}/K)$ and hence through a finite quotient of $G$. We can then use property (7) of Deligne-Langlands constants mentioned in section \ref{sectionrecall}.

\section{$\Lambda$ is group ring of a finite group and $T$ is arbitrary} \label{sectiongroupring} In this section $\Lambda = O[P]$ is a group ring of a finite group and $T$ is arbitrary finitely generated projective $\Lambda$-module. Therefore $T$ is compact and the action of $Gal(\overline{K}/K)$ factors through a p-adic Lie quotient, say $G$. We define $\epsilon_{0, \Lambda}(K, T, \psi)$ as follows: Let $Y$ be the $\Lambda$-module $T$ on which $\Lambda(G)$ acts on the right by 
\[
y \cdot \sigma = \tilde{\sigma}^{-1} \cdot y \hspace{1cm} \text{ for } y \in Y, \sigma \in G.
\] 
Here $\tilde{\sigma}$ is any lift of $\sigma$ to $Gal(\overline{K}/K)$ and the action on the right hand side is the original Galois action on $T$. Then 
\[
Y \otimes_{\Lambda(G)} \Lambda(G)^{\varhash} \cong T 
\] 
as $\Lambda$-modules with the action of $Gal(\overline{K}/K)$. We define $\epsilon_{0, \Lambda}(K, T, \psi)$ to be the image of $\epsilon_{0, \Lambda(G)}(K , \Lambda(G)^{\varhash} , \psi)$ under the map 
\[
Y \otimes_{\Lambda(G)} : K_1(\Lambda(G)) \rightarrow K_1(\Lambda).
\]
Note that this definition is enforced upon us by condition (ii) in the conjecture. Moreover, condition $(ii')$ in theorem \ref{theoremspecialcase} implies that this definition is independent of the choice of $G$. 

\begin{theorem} The element $\epsilon_{0,\Lambda}(K,T,\psi_K)$ defined above is the unique element in $K_1(\tilde{\Lambda})$ satisfying (i)-(vi) of conjecture \ref{conjecture}.
\label{theoremfinitegrouprings}
\end{theorem}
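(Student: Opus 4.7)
The construction forces the definition by property (ii), so uniqueness among any $\epsilon$-theories satisfying (i)--(vi) is immediate: any other candidate must agree, via (ii), with the image of $\epsilon_{0,\Lambda(G)}(K,\Lambda(G)^{\varhash},\psi)$ (whose existence and properties we already have from Proposition \ref{proppadic}) under $Y\otimes_{\Lambda(G)}(-)$. The remaining task is thus to verify that the element defined above does satisfy (i)--(vi). My plan is to reduce each of (i)--(vi) to the corresponding primed property in Theorem \ref{theoremspecialcase} by exploiting functoriality of the map $Y\otimes_{\Lambda(G)}-:K_1(\Lambda(G))\to K_1(\Lambda)$, together with standard additivity properties of $K_1$.

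First I would treat the easier conditions (iii), (iv), (vi). Property (iii) follows because the formation of $Y$ is insensitive to the choice of $\psi$, so changing $\psi$ to $c\psi$ only changes the $\Lambda(G)$-level element by the factor $[\Lambda(G)^{\varhash},\mathrm{rec}(c)]$ by $(iii')$; tensoring with $Y$ turns this into $[T,\mathrm{rec}(c)]$, as required. Property (iv) is identical in spirit, using $(iv')$ and the fact that $\varphi_p$ commutes with the $Y\otimes_{\Lambda(G)}-$ map (since $\varphi_p$ acts on the $J$-coefficient only). Property (vi) follows from $(vi')$ because the isomorphism $Y\otimes_{\Lambda(G)}\Lambda(G)^{\varhash}\cong T$ carries the unramified Frobenius action $-\varphi_l^{1+n(\psi)}$ on $\Lambda(G)^{\varhash}$ to the same action on $T$.

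Next I would handle (ii), which is pure category theory: if $T'\cong Y'\otimes_{\Lambda}T$, then taking $G$ large enough that both actions factor through $G$, there is a canonical isomorphism of right $\Lambda(G)$-modules $Y_{T'}\cong Y'\otimes_{\Lambda}Y_T$, and hence the composition
\[
K_1(\Lambda(G))\xrightarrow{Y_T\otimes}K_1(\tilde\Lambda)\xrightarrow{Y'\otimes}K_1(\tilde{\Lambda'})
\]
agrees with $Y_{T'}\otimes-$. Applying this to $\epsilon_{0,\Lambda(G)}(K,\Lambda(G)^{\varhash},\psi)$ gives exactly the required equality. For condition (v), I would observe that when $\Lambda=O_L$ and $T$ is an $O_L$-lattice in $V$, choosing $G$ to be the image of Galois in $\mathrm{Aut}_{O_L}(T)$, the evaluation map $K_1(\tilde{O_L})\to E^\times$ composed with $Y\otimes_{\Lambda(G)}-$ is precisely the character of $K_1(\tilde{\Lambda(G)})$ defined by the representation $V$ of $G$; thus $(v')$ applied to this representation yields $\epsilon_{0,E}(K,V,\psi,dx)$.

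The main obstacle, and where I would spend most care, is property (i). Given $0\to T'\to T\to T''\to 0$, I would take $G$ large enough that all three Galois actions factor through it, obtaining an exact sequence $0\to Y'\to Y\to Y''\to 0$ of right $\Lambda(G)$-modules that are finitely generated projective as left $\Lambda$-modules. This sequence $\Lambda$-splits (projectivity of $Y''$), so for any finitely generated projective $\Lambda(G)$-module $P$ with automorphism $\alpha$, the module $Y\otimes_{\Lambda(G)}P$ sits in a short exact sequence of $\Lambda$-modules
\[
0\to Y'\otimes_{\Lambda(G)}P\to Y\otimes_{\Lambda(G)}P\to Y''\otimes_{\Lambda(G)}P\to 0
\]
equivariant with respect to $1\otimes\alpha$. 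The additivity theorem in $K_1$ then yields
\[
[Y\otimes P,1\otimes\alpha]=[Y'\otimes P,1\otimes\alpha]\cdot[Y''\otimes P,1\otimes\alpha]
\]
in $K_1(\tilde\Lambda)$. Applied to $(P,\alpha)$ representing $\epsilon_{0,\Lambda(G)}(K,\Lambda(G)^{\varhash},\psi)$, this is exactly (i). The delicate point is checking that one can realise $\epsilon_{0,\Lambda(G)}(K,\Lambda(G)^{\varhash},\psi)$ as an actual pair $(P,\alpha)$ to which additivity applies, which one can do after stabilising (adding an identity block of a projective), this being harmless in $K_1$.
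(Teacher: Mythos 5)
Your proofs of properties (i)--(vi) essentially mirror the paper's: for (i) you use the short exact sequence $0\to Y'\to Y\to Y''\to 0$ of right $\Lambda(G)$-modules and additivity in $K_1$, for (ii) you use associativity of the change-of-rings functors, and (iii)--(vi) reduce to the primed properties. All of this is sound and matches the paper in spirit, though the paper's verification is terser.

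The gap is in your opening paragraph on uniqueness. You claim that any other candidate $\epsilon'$ must, via (ii), agree with $Y\otimes_{\Lambda(G)}\epsilon_{0,\Lambda(G)}(K,\Lambda(G)^{\varhash},\psi)$, where the right-hand side is the specific element constructed in Proposition~\ref{proppadic}. But (ii) only tells you that $\epsilon'_{0,\Lambda}(K,T,\psi)=Y\otimes_{\Lambda(G)}\,\epsilon'_{0,\Lambda(G)}(K,\Lambda(G)^{\varhash},\psi)$, i.e.\ it relates the candidate to \emph{its own} value on $\Lambda(G)^{\varhash}$, not to the paper's construction. You would then need to know $\epsilon'_{0,\Lambda(G)}=\epsilon_{0,\Lambda(G)}$, but $G$ here may be an infinite $p$-adic Lie group (since $T$ is a compact $\mathbb{Z}_p$-module, the Galois action need not factor through a finite quotient), and Proposition~\ref{proppadic} establishes only existence, not uniqueness, at that level; indeed the main theorem of the paper is expressly ``up to uniqueness.'' The paper instead argues uniqueness directly at the finite level $\Lambda=O[P]$: by Izychev--Venjakob (corollary 2.44), the evaluation map $K_1(J\otimes_{\mathbb{Z}_p}O[P])\to\prod_{\rho\in R(P)}(J\otimes_{\mathbb{Z}_p}O[\rho])^{\times}$ is injective, and the value of $\epsilon_{0,\Lambda}(K,T,\psi)$ under each $\rho$ is forced by (ii) (passing to the finite-rank quotient $Y_\rho\otimes_{O[P]}T$) combined with (v). That argument never leaves the finite-group world and so closes; yours does not.
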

\noindent{\bf Proof:} Again the uniqueness follows from the fact that the map 
\[
K_1(J\otimes_{\mathbb{Z}_p} O[P]) \rightarrow \prod_{\rho \in R(P)} (J \otimes_{\mathbb{Z}_p}O[\rho])^{\times}
\] 
is injective (see corollary 2.44 Izychev-Venjakob \cite{IzychevVenjakob:2010}). Now we show that it has the required properties. Assume that we have three finitely generated projective $\Lambda$-modules $T_1, T_2, T_3$ in a short exact sequence $T_1 \hookrightarrow T_2 \twoheadrightarrow T_3$. We choose $G$ such that the Galois action on all $T_i$'s factors through $G$. Then $\epsilon_{0, \Lambda}(K, T_i, \psi)$ is the image of $\epsilon:= \epsilon_{0, \Lambda(G)}(K, \Lambda(G)^{\varhash}, \psi)$ under the map $\pi_i: Y_i \otimes_{\Lambda(G)}$ (for $1 \leq i \leq 3$). Note that we have a short exact sequence $Y_1 \hookrightarrow Y_2 \twoheadrightarrow Y_3$. Therefore, $\pi_2(a) = \pi_1(a)\pi_3(a)$. Hence 
\[
\epsilon_{0,\Lambda}(K, T_2, \psi) = \pi_2(\epsilon) = \pi_1(\epsilon)\pi_3(\epsilon) = \epsilon_{0, \Lambda}(K, T_1, \psi) \epsilon_{0, \Lambda}(K, T_3, \psi),
\]
i.e. (i) is satisfied.

\noindent We next verify condition (ii). We keep the notation from (ii) in theorem \ref{maintheorem}. Choose $G$ such that the action of Galois on $T$ and $T'$ both factor through $G$. Then $\epsilon_{0, \Lambda}(K, T, \psi)$ and $\epsilon_{0, \Lambda'}(K, T', \psi)$ are images of $\epsilon:=\epsilon_{0, \Lambda(G)}(K, \Lambda(G)^{\varhash}, \psi)$ under the maps, say $\pi_1 := Y_1 \otimes_{\Lambda(G)}$ and $\pi_2:= Y_2 \otimes_{\Lambda(G)}$ respectively. Then 
\[
Y \otimes_{\Lambda} Y_1 \otimes_{\Lambda(G)} \Lambda(G)^{\varhash} \cong Y \otimes_{\Lambda} T \cong T'
\]
Therefore the image of $\epsilon$ under $\pi_2$ and $Y\otimes_{\Lambda} \pi_1$ is the same. Hence (ii) is satisfied. 

\noindent Conditions (iii) and (iv) are easily verified. Condition (v) is verified from condition $(v')$ for $\epsilon_{0, \Lambda(G)}(K, \Lambda(G)^{\varhash}, \psi)$. Condition (vi) holds because of condition $(vi')$ holds for $\epsilon_{0, \Lambda(G)}(K, \Lambda(G)^{\varhash}, \psi)$.

\section{The general case} \label{sectiongeneral} \label{lastsection} In this section $\Lambda:= \Lambda_O(G)$ is Iwasawa algebra and $T$ is arbitrary finitely generated projective $\Lambda$-module. We again use the isomorphism of Fukaya-Kato \cite{FukayaKato:2006} proposition 1.5.6
\[
K_1(\Lambda) \xrightarrow{\sim} \ilim{U} K_1(O[G/U]),
\]
where $U$ runs through open normal subgroup of $G$. Theorem \ref{theoremfinitegrouprings} gives existence of $\epsilon_U:=\epsilon_{0, O[G/U]}(K, O[G/U] \otimes_{\Lambda_O(G)}T, \psi) \in K_1(O[G/U])$ for each U. Property (ii) for these elements along with uniqueness ensures that the tuple $(\epsilon_U)_U$ lies in the group $\ilim{U}K_1(\widetilde{O[G/U]})$. Now consider the following commutative diagram
\[
\xymatrix{
1 \ar[r] & \ilim{U} K_1'(O[G/U]) \ar[r] & \ilim{U}K_1(\widetilde{O[G/U]}) \ar[r]^{1 - \varphi_p} & \ilim{U} K_1(\widetilde{O[G/U]}) & \\
& K_1'(\Lambda) \ar[u]^{\sim} \ar[r]_{\iota} & K_1(\tilde{\Lambda}) \ar[r]_{1 - \varphi_p} \ar[u] & K_1(\tilde{\Lambda}) \ar[u] \ar[r] & 1
}
\]
The top row is exact by theorem 2.45 Izychev-Vanjakob \cite{IzychevVenjakob:2010}. The bottom row is not known to be exact though it is expected to be. Consider $[T, rec(p)]$ as an element of $K_1(\tilde{\Lambda})$. Its image under the second vertical arrow is $([T_U, rec(p)])_U$, where $T_U = O[G/U] \otimes_{\Lambda_O(G)} T$. By proposition 3.4.5 in Fukaya-Kato \cite{FukayaKato:2006} the map $1-\varphi_p$ in the bottom row is sujective and hence there is an element $f \in K_1(\tilde{\Lambda})$ such that $\varphi_p(f) = [T,rec(p)]f$. Denote the image of $f$ under the second vertical arrow by $(f_U)_U$. Property (iv) of the epsilon constants $\epsilon_U$ and their uniqueness implies that $(\epsilon_U f_U^{-1})_U \in \ilim{U} K_1'(O[G/U])$. Let $u:=u(T)$ be the inverse image of this element under the first vertical arrow. Define $\epsilon_{0,\Lambda}(K, T, \psi) := \iota(u)f$. This is the element we seek. Now we can finish the proof of our main theorem.

\begin{proof}[of theorem \ref{maintheorem}] We show that the element $\epsilon_{0, \Lambda}(K, T, \psi)$ defined above satisfies all the required properties in theorem \ref{maintheorem}. Let $T, T', T''$ be as in (i) of theorem \ref{maintheorem}. Then, for every open normal subgroup $U$ of $G$, the modules $T_U, T'_U, T''_U$ are related in the same manner (since they are projective $\Lambda$-modules). We then have the relation 
\[
\epsilon_{0, O[G/U]}(K, T_U, \psi) = \epsilon_{0, O[G/U]}(K, T'_U, \psi) \epsilon_{0,O[G/U]}(K, T''_U, \psi).
\]
Moreover, 
\[
[T, rec(p)] = [T', rec(p)][T'', rec(p)].
\]
Hence $u(T) = u(T')u(T'')$ and condition (i) is satisfied. 

\noindent We next verify condition (ii). For an open normal subgroup $U$ of $G$ and an open normal subgroup $U'$ of $G'$, we put $Y_{U',U}$ for the $O[G'/U']-O[G/U]$-bimodule $O[G'/U'] \otimes_{\Lambda(G')} Y \otimes_{\Lambda(G)} O[G/U]$. Then we have
\[
Y_{U',U} \otimes_{O[G/U]} T_U \cong T'_{U'}
\]
Hence, under the map $Y_{U',U} \otimes_{O[G/U]}$, the element $\epsilon_{0, O[G/U]}(K, T_U, \psi)$ maps to $\epsilon_{0, O[G'/U']}(K, T_{U'}, \psi)$. Taking into account that 
\[
Y \otimes_{\Lambda} [T, rec(p)] = [T', rec(p)] \in K_1(\tilde{\Lambda}')
\]
we get (ii). In order to verify (iii) we notice that $f$ used in the construction of $\epsilon_{0, \Lambda}(K, T, \psi)$ does not depend on $\psi$. The factors $\epsilon_U$, which depend on $\psi$, satisfy (iii). Therefore $u(T)$, which depends on $\psi$, satisfy (iii). Hence we get (iii) for $\epsilon_{0, \Lambda}(K, T, \psi)$. Condition (iv) holds by construction. Condition (vi) is easily verified.  
\end{proof}

We end with an algebraic criterion to prove uniqueness of the epsilon factors.

\begin{proposition} If for any compact $p$-adic Lie group $G$ and any finite extension $O$ of $\mathbb{Z}_p$, we have an injection 
\[
K_1(\widetilde{\Lambda_O(G)}) \hookrightarrow \ilim{n} K_1(\widetilde{\Lambda_O(G)}/I^n),
\]
where $I$ is the radical ideal of $\widetilde{\Lambda_O(G)}$, then the uniqueness in Conjecture \ref{conjecture} holds.
\label{propsuffcondition}
\end{proposition}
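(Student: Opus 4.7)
The plan is to reduce uniqueness to the finite-group case already handled in Theorem~\ref{theoremfinitegrouprings} by pushing into the Artinian quotients $\tilde{\Lambda}/I^{n}$ and then invoking the assumed injection. Let $\epsilon,\epsilon'\in K_{1}(\tilde{\Lambda})$ be two elements both satisfying conditions (i)--(vi) of Conjecture~\ref{conjecture}, and set $\delta:=\epsilon\cdot(\epsilon')^{-1}$. By the hypothesized injectivity, it suffices to prove that $\delta$ lies in the kernel of the natural map $K_{1}(\tilde{\Lambda})\to K_{1}(\tilde{\Lambda}/I^{n})$ for every $n\ge 1$.

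First I would use condition (ii) of the conjecture with $\Lambda' = O[G/U]$ and with $Y = O[G/U]$ viewed as an $O[G/U]$-$\Lambda$-bimodule via the natural surjection $\Lambda \twoheadrightarrow O[G/U]$, for an arbitrary open normal subgroup $U\trianglelefteq G$. Condition (ii) then forces both $\epsilon$ and $\epsilon'$ to map to $\epsilon_{0,O[G/U]}(K,\,O[G/U]\otimes_{\Lambda}T,\,\psi)\in K_{1}(\widetilde{O[G/U]})$, and this latter element is uniquely determined by Theorem~\ref{theoremfinitegrouprings}. Consequently $\delta$ maps to $1$ in $K_{1}(\widetilde{O[G/U]})$ for every open normal subgroup $U$.

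Next, for each fixed $n$ I would show that the surjection $\tilde{\Lambda}_{O}(G)\twoheadrightarrow \tilde{\Lambda}_{O}(G)/I^{n}$ factors through $\widetilde{O[G/U_{n}]}$ for some sufficiently small open normal $U_{n}\trianglelefteq G$. The point is that the Jacobson radical $I$ contains the maximal ideal of $J\hat\otimes_{\mathbb{Z}_{p}} O$ together with the augmentation ideal $I_{G_{0}}$ of any open pro-$p$ normal subgroup $G_{0}\trianglelefteq G$; since $\tilde{\Lambda}_{O}(G)/I^{n}$ is a (discrete) Artinian ring, the kernel of the projection $\tilde{\Lambda}_{O}(G) \to \widetilde{O[G/U_{n}]}$ can be made to lie inside $I^{n}$ by choosing $U_{n}\subseteq G_{0}$ small enough (concretely, contained in a congruence subgroup of $G_{0}$ relative to the radical filtration). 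Combined with the previous paragraph, this shows the image of $\delta$ in $K_{1}(\tilde{\Lambda}/I^{n})$ is trivial for every $n$, and the hypothesis then yields $\delta=1$.

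The main obstacle is the factorization step: one must verify carefully that the Jacobson-radical filtration $\{I^{n}\}$ on $\tilde{\Lambda}_{O}(G)$ is cofinal with (or at least dominated by) the filtration by kernels of the projections $\tilde{\Lambda}_{O}(G)\to \widetilde{O[G/U]}$, where $U$ ranges over open normal subgroups. This is intuitively clear for uniform pro-$p$ groups via the Lazard filtration, but in the generality of a compact $p$-adic Lie group $G$ one must handle the contribution of the coefficient ring $J\hat\otimes_{\mathbb{Z}_{p}} O$ as well as the non pro-$p$ part of $G$, which enters only through a finite quotient and so does not obstruct the cofinality argument. Once this is in place, the remainder of the proof is a direct application of condition (ii) and the already established uniqueness at finite levels.
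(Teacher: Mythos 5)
Your argument spells out what the paper's one-line proof (``This is clear from our construction'') leaves implicit, and it is essentially the intended one: condition (ii) together with the uniqueness at finite levels established in Theorem~\ref{theoremfinitegrouprings} forces any two admissible $\epsilon,\epsilon'$ to have the same image in $K_1(\widetilde{O[G/U]})$ for every open normal $U\trianglelefteq G$, and dominating the radical-adic filtration by the filtration coming from open normal subgroups transfers this agreement to each $K_1(\tilde\Lambda/I^n)$, whereupon the hypothesized injection gives $\epsilon=\epsilon'$.

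The cofinality step you flag as ``the main obstacle'' is indeed the only thing not purely formal, but it does hold in the generality needed. Fix an open normal uniform pro-$p$ subgroup $G_0\trianglelefteq G$. Since $\Lambda_O(G)/\mathfrak{m}_{G_0}\Lambda_O(G)$ is Artinian (it is a group ring of the finite group $G/G_0$ over a finite ring), the ideal $\mathfrak{m}_{G_0}\Lambda_O(G)$ is contained in the Jacobson radical of $\Lambda_O(G)$; and by Lazard's theory the augmentation ideals of the lower $p$-series subgroups of $G_0$ are cofinal with the powers of $\mathfrak{m}_{G_0}$. So taking $U_n$ deep enough in that series gives $\ker\bigl(\Lambda_O(G)\to O[G/U_n]\bigr)\subseteq I_0^n$ where $I_0$ is the radical of $\Lambda_O(G)$, and applying $J\hat\otimes_{\mathbb{Z}_p}(-)$ (which is faithfully flat and raises radicals to radicals here, since $J$ is a complete DVR with $p$ in its maximal ideal) gives $\ker(\tilde\Lambda\to\widetilde{O[G/U_n]})\subseteq I^n$. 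With that supplied, your proof is complete and correct.
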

\begin{proof} This is clear from our construction.  
\end{proof}

\section{Appendix: $K_1$ of group rings} \label{sectionappendix} In this section we gather algebraic results used in the main text of the article. 

\subsection{Definition of $K_1$} \begin{definition} Let $R$ be a ring with 1. The group $K_1(R)$ is an abelian group, whose group law we denote multiplicatively, defined by the following generators and relations. 
\begin{itemize}
\item[] Generators: $[P, \alpha]$, where $P$ is a finitely generated projective $R$-module and $\alpha$ is an automorphism of $P$. 
\item[] Relations: 
\begin{itemize}
\item[(i)] $[P, \alpha] = [Q, \beta]$ if there is an isomorphism $f$ from $P$ to $Q$ such that $f \circ \alpha = \beta \circ f$.
\item[(ii)] $[P, \alpha \circ \beta] = [P, \alpha] [P, \beta]$.
\item[(iii)] $[P\oplus Q, \alpha \oplus \beta] = [P, \alpha] [Q, \beta]$.
\end{itemize}
\end{itemize}
\end{definition}

\begin{example}(Change of rings) Let $R$ and $R'$ be two rings. Let $Y$ be a finitely generated projective $R'$-module. Assume that $Y$ is also a right $R$-module and actions of $R$ and $R'$ are compatible i.e. $Y$ is a $R'$-$R$ bimodule. Then there is a map 
\[
Y \otimes_R :K_1(R) \rightarrow K_1(R')
\]
given by $[P,\alpha] \mapsto [Y \otimes_R P, id_Y \otimes \alpha]$. 
\end{example}

Here is an alternate description of $K_1(R)$. We have a canonical homomorphism $GL_n(R) \rightarrow K_1(R)$ defined by mapping $\alpha$ in $GL_n(R)$ to $[R^n, \alpha]$, where $R^n$ is regarded as a set of row vectors and $\alpha$ acts on them from the right. Now using the inclusion maps $GL_n(R) \hookrightarrow GL_{n+1}(R)$ given by $g \mapsto \left( \begin{array}{cc} g & 0 \\ 0 & 1\end{array} \right)$, we let 
\[
GL(R) = \cup_{n \geq 1} GL_n(R).
\]
Then the homomorphisms $GL_n(R) \rightarrow K_1(R)$ induce an isomorphism (see for example Oliver \cite{Oliver:1988}, chapter 1)
\[
\frac{GL(R)}{[GL(R), GL(R)]} \xrightarrow{\sim} K_1(R),
\]
where $[GL(R), GL(R)]$ is the commutator subgroup of $GL(R)$. If $R$ is commutative, then the determinant maps, $GL_n(R) \rightarrow R^{\times}$, induce the determinant map
\[
det: K_1(R) \rightarrow R^{\times}, 
\]
via the above isomorphism. This gives a splitting of the canonical homomorphism $R^{\times} = GL_1(R) \rightarrow K_1(R)$. If $R$ is semilocal then Vaserstein (\cite{Vaserstein:1969} and \cite{Vaserstein:2005}) proves that the canonical homomorphism $R^{\times} =GL_1(R) \rightarrow K_1(R)$ is surjective. From these two facts we conclude that if $R$ is a semilocal commutative ring, then the determinant map induces a group isomorphism between $K_1(R)$ and $R^{\times}$.

\begin{example} If $P$ is a compact $p$-adic Lie group and $O$ is a complete discrete valuation ring of characteristic 0 with residue field of characteristic $p$, then the Iwasawa algebra $\Lambda_O(P)$ is a semilocal ring. Hence, if $P$ is an abelian compact $p$-adic Lie group then $K_1(\Lambda_O(P)) \cong \Lambda_O(P)^{\times}$. 
\end{example}
 
Let $I \subset R$ be any ideal. Denote the group of invertible matrices in $GL(R)$ which are congruent to the identity modulo $I$ by $GL(R,I)$. Denote the smallest normal subgroup of $GL(R)$ containing all elementary matrices congruent to the identity modulo $I$ by $E(R, I)$. Finally, set $K_1(R,I) = GL(R,I)/E(R,I)$. A lemma of Whitehead says that $E(R,I) = [GL(R), GL(R,I)]$. Hence $K_1(R,I)$ is an abelian group.

\subsection{$K_1$ of some group rings} \label{subsectiondescriptionofk1} Let $N$ and $\Gamma$ be finite cyclic groups of $p$-power orders and $H= N \rtimes \Gamma$. Let $\Delta$ be a finite cyclic group whose order is prime to $p$ and put $G=H \times \Delta$.  In this section we describe $K_1(J[G])$ following Kato \cite{Kato:2005} and Kakde \cite{Kakde:2011}. Let us set up some notation.  Assume that the order of $\Gamma$ is $p^n$. We put
\[
H_i:= N \rtimes \Gamma^{p^i} \hspace{.5cm} \text{and} \hspace{.5cm} G_i := H_i \times \Delta \hspace{1cm} \text{for all }  0 \leq i \leq n.
\] 
Therefore, $H_0 =H$ and $H_n = N$. We define the map 
\[
\theta_{G, J} : K_1(J[G]) \rightarrow \prod_{i=0}^n J[G_i^{ab}]^{\times},
\]
which is, in each factor, a composition of the norm map,
\[
norm: K_1(J[G]) \rightarrow K_1(J[G_i]), 
\]
followed by the natural projection
\[
K_1(J[G_i]) \rightarrow K_1(J[G_i^{ab}]) \cong J[G_i^{ab}]^{\times}.
\]
We define some more maps. For all $0 \leq i \leq j \leq n$, there are two maps; the norm map
\[
Nr_{i,j} : J[G_i^{ab}]^{\times} \rightarrow J[H_j/[H_i, H_i] \times \Delta]^{\times},
\] 
and the natural surjection
\[
\pi_{i,j}: J[G_j^{ab}] \rightarrow J[H_j/[H_i,H_i] \times \Delta].
\]
For every $0 \leq i \leq n$, we have 
\[
\sigma_i : J[G_i^{ab}] \rightarrow J[G_i^{ab}]^{\Gamma}
\]
given by 
\[
x \mapsto \sum_{\gamma \in \Gamma/\Gamma^{p^i}} \gamma x \gamma^{-1}.
\]
Put $T_i$ for the image of $\sigma_i$. Then $T_i$ is an ideal in the ring $J[G_i^{ab}]^{\Gamma}$. Lastly, for every $1 \leq i \leq n$, we need the map 
\[
ver_i : J[G_{i-1}^{ab}] \rightarrow J[G_i^{ab}],
\] 
induced by the Frobenius on the coefficients $J$ and the transfer map $ver : G_{i-1}^{ab} \rightarrow G_i^{ab}$.

\begin{theorem} The map $\theta_{G,J}$ is injective and its image consists of tuples $(x_i)_{i=0}^n$ such that 
\begin{itemize}
\item[M1.] For all $0 \leq i \leq j \leq n$, we have $Nr_{i,j}(x_i) = \pi_{i,j}(x_j)$. 
\item[M2.] For all $0 \leq i \leq n$, the element $x_i$ is fixed under conjugation by every $g \in G$. 
\item[M3.] For all $1 \leq i \leq n$, we have the congruence $x_{i} \equiv ver(x_{i-1}) (\text{mod } T_i)$. 
\end{itemize} 
\label{theoremk1}
\end{theorem}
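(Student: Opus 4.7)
The plan is to prove the theorem by combining the necessity of M1--M3 (which follow from formal properties of the norm and transfer maps) with a logarithmic linearisation that converts the multiplicative problem into an additive one, where the image of $\theta_{G,J}$ can be matched exactly with the stated conditions. Throughout the proof, I would exploit the fact that $|\Delta|$ is prime to $p$ to decompose
\[
J[G] \;\cong\; \prod_{\rho \in \hat{\Delta}} J[\rho][H],
\]
via idempotents, so that the problem reduces, block by block, to describing $K_1(J[\rho][H])$ for $H = N \rtimes \Gamma$. This is essentially the case treated in Kato \cite{Kato:2005} and Kakde \cite{Kakde:2011}, and I would mimic that argument with $J[\rho]$ in place of $\mathbb{Z}_p$.

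First I would check the necessity of M1--M3. Condition M1 is the Mackey-style compatibility between the norm $K_1(J[G]) \to K_1(J[G_j])$ (followed by projection to $J[H_j/[H_i,H_i] \times \Delta]^\times$) computed via $G_i$ versus $G_j$; this follows from the double coset formula applied to the chain $G_i \subseteq G_j \subseteq G$. Condition M2 is immediate, since $\Gamma$ acts trivially on any element lying in the image of the norm to $J[G_i^{ab}]$. The delicate condition is M3, which is the integral form of the character-theoretic identity saying that inducing a character $\chi_{i-1}$ of $G_{i-1}^{ab}$ up to $G_i^{ab}$ corresponds on the $K_1$-side, modulo $p^i$, to the transfer map; I would derive it using the integral logarithm developed by Oliver--Taylor in the form of Chinburg--Pappas--Taylor \cite{ChinburgPappasTaylor:2012}, applied to elements of $K_1(J[G])$ and pushed down to each $J[G_i^{ab}]^\times$.

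For injectivity, the point is that for $H$ a $p$-group of the form $N \rtimes \Gamma$, every irreducible $\mathbb{C}_p$-representation of $H$ is monomial by proposition 25 of Serre \cite{Serre:representationtheory}, hence induced from a one dimensional character of some $H_i$. Composing $\theta_{G,J}$ with evaluation at characters of the $G_i^{ab}$ therefore detects any element of $K_1(J[G])$ after enlarging coefficients; combining this with the fact, used in Izychev--Venjakob \cite{IzychevVenjakob:2010}, that $J[G] \cap \mathfrak{M}^\times = K_1(J[G])$ where $\mathfrak{M}$ is the maximal order, gives the injectivity of $\theta_{G,J}$.

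For surjectivity onto the subset cut out by M1--M3, I would use the integral logarithm to linearise. The logarithm gives a four-term exact sequence relating $K_1'(J[G])$ to an additive object built from the $J[G_i^{ab}]$, and one checks that $\theta_{G,J}$ intertwines with its additive counterpart. This reduces surjectivity to an explicit additive statement about tuples in $\prod_i J[G_i^{ab}]$ satisfying the additive analogues of M1--M3, which can be verified by induction on the $p$-power order of $\Gamma$, using the explicit shapes of $\sigma_i$, $\mathrm{ver}_i$ and the norm maps. One must then lift from $K_1'$ back to $K_1$ by controlling the $SK_1$-contribution, which vanishes after inverting $p$ and is detected additively.

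The main obstacle is M3: it is a congruence that encodes, in a dual form, Brauer's induction theorem (as explained in the introduction via the dual induction map $\beta_{G,J}$), and establishing that it exactly characterises the image requires a careful analysis of the kernel of the map from virtual characters of the $G_i$ to virtual characters of $G$ induced by $\mathrm{Ind}$. Once this character-theoretic computation is in place, the integral logarithm together with standard diagram chases yields both the congruences M3 and their sufficiency.
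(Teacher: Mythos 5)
Your proposal follows essentially the same route as the paper: reduce to an additive description via the integral logarithm of Oliver--Taylor (in the Chinburg--Pappas--Taylor generality), intertwine $\theta_{G,J}$ with an additive counterpart, and use the known additive description (the trace map $\beta_{G,J}$ and conditions A1--A3, which the paper cites from Kakde and Schneider--Venjakob) to characterise the image. Two remarks worth making. First, your injectivity argument is packaged differently: you argue via monomiality of irreducibles of $H$ (Serre, Proposition~25) together with the maximal-order fact, whereas the paper obtains injectivity as part of the diagram chase comparing the exact sequences coming from the integral logarithm; both are valid, and your version has the advantage of being self-contained, but the paper's is the one that falls out of the same machinery used for surjectivity. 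Second, your remark that one controls the $SK_1$-contribution ``which vanishes after inverting $p$'' is imprecise and, as stated, insufficient: what the argument actually needs, and what the paper invokes from Izychev--Venjakob (Corollary~2.44), is that $SK_1(J[G]) = \{1\}$ on the nose for the groups $G = H \times \Delta$ in question, since $\theta_{G,J}$ has commutative target and therefore automatically kills $SK_1$; injectivity of $\theta_{G,J}$ on $K_1(J[G])$ itself (rather than on $K_1/SK_1$) cannot hold unless $SK_1$ is trivial. A mere finiteness or vanishing-after-localisation statement would not do. Finally, you elide the precise form of the linearising map $\mathcal{L}_i((x_j)_j) = \log(x_i / \mathrm{ver}_i(x_{i-1}))$ and the commutativity of the relevant diagram (Kakde, Corollary~4.4), which is exactly where M3, the transfer congruence, is produced; this is the one place where ``applied to elements of $K_1(J[G])$ and pushed down'' glosses over the substantive computation.
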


\begin{proof} First we state an additive result. Let $Conj(G)$ denote the set of conjugacy classes of $G$ and define $\tau$ to be the natural surjection $\tau: J[G] \rightarrow J[Conj(G)]$. Put $I_G = Ker(J[G] \rightarrow J)$. Define 
\[
J[Conj(G)] \xrightarrow{\beta_{G,J}} \prod_{i=0}^n J[G_i^{ab}],
\]
where the map in each component is the composition of the trace map
\[
J[Conj(G)] \rightarrow J[Conj(G_i)]
\]
\[
\text{$J$-linear and } \hspace{1cm} g \mapsto \left\{ \begin{array}{c c} 0 & \text{ if $g \notin G_i$} \\ \sum_{x \in G/G_i} xgx^{-1} & \text{ if $g \in G_i$} \end{array} \right.
\]
with the natural surjection $J[Conj(G_i)] \rightarrow J[G_i^{ab}]$. The map $\beta_{G,J}$ is injective and the image consists of all $(a_i)_{1 \leq i \leq n} \in \prod_{i=0}^n J[G_i^{ab}]$ such that
\begin{itemize}
\item[A1.] For all $0 \leq i \leq j \leq n$, we have $Tr_{i,j}(a_i) = \pi_{i,j}(a_j)$. Here $Tr_{i,j}$ is the trace map
\[
Tr_{i,j} : J[G_i^{ab}] \rightarrow J[H_j/[H_i, H_i] \times \Delta].
\]
\item[A2.] For all $0 \leq i \leq n$, the element $a_i$ is fixed under conjugation by every $g \in G$. 
\item[A3.] For all $0 \leq i \leq n$, we have $a_i \in T_i$. 
\end{itemize}
Moreover, the image of $\tau(I_G)$ under $\beta_{G,J}$ is precisely equal to the elements in $\prod_{i=0}^n I_{H_i^{ab}}$ satisfying A1-A3. The last statement is clear from the remaining claims whose proof can be found in Kakde \cite{Kakde:2011} section 3.1, or Kakde \cite{Kakde:2012} section 5.3 or Schneider-Venjakob \cite{SchneiderVenjakob:2012} section 3. Next we use the integral logarithm of Oliver and Taylor. This was classically defined only with the ring of integers in finite unramified extension of $\mathbb{Q}_p$ as coefficients. Hence we use the recent extension to more general coefficients as in Chinburg-Pappas-Taylor \cite{ChinburgPappasTaylor:2012}.  Define $K_1'(J[G], I_G)$ to be the image of $K_1(J[G], I_G)$ in $K_1(J[G])$ under the natural map $K_1(J[G], I_G) \rightarrow K_1(J[G])$. Combining equation (14) of \emph{loc. cit.} with the fact that $SK_1(J[G]) = \{1\}$ (corollary 2.44 in Izychev-Venjakob \cite{IzychevVenjakob:2010}) we have an exact sequence
\[
1 \rightarrow G^{ab} \rightarrow K_1'(J[G], I_G) \xrightarrow{L} \tau(I_G) \rightarrow 1.
\]
We denote restrictions of the maps $\theta_{G,J}$ and $\beta_{G,J}$ to subsets by the same symbol. Consider the commutative diagram
\[
\xymatrix{ 1 \ar[r] & G^{ab} \ar[d]_{\theta_{G,J}} \ar[r] & K_1'(J[G], I_G) \ar[r]^{L} \ar[d]_{\theta_{G,J}} &  \tau(I_G) \ar[d]^{\beta_{G,J}} \ar[r] & 0 \\
 1 \ar[r] & G_i^{ab} \ar[r] & \prod_{i=0}^n 1+I_{G_i^{ab}} \ar[r]_{\mathcal{L}} &  I_{G_i^{ab}} \ar[r] & 0}
\]
with exact rows, where $\mathcal{L} = (\mathcal{L}_i)$ is the map
\[
\mathcal{L}_i((x_j)_j) = log\left( \frac{x_i}{ver_i(x_{i-1})}\right).
\]
It is proven in Kakde \cite{Kakde:2011}, corollary 4.4 that this $\mathcal{L}$ makes the diagram commute. One can now proceed as in \emph{loc. cit.} to prove that $(x_i) \in \prod_{i=0}^n 1+ I_{G_i^{ab}}$ lies in $\theta_{G,J}(K_1'(J[G], I_G)$ if and only if it satisfies M1-M3. Next we use the following commutative diagram with exact rows
\[
\xymatrix{ 
1 \ar[r] & K_1'(J[G], I_G) \ar[r] \ar[d]_{\theta_{G,J}} & K_1(J[G]) \ar[r] \ar[d]_{\theta_{G,J}} & K_1(J) \ar[r] \ar[d] & 1 \\
1 \ar[r] & \prod_{i=0}^n 1+I_{G_i^{ab}} \ar[r] & \prod_{i=0}^n K_1(J[G_i^{ab}]) \ar[r] & \prod_{i=0}^n K_1(J) \ar[r] & 1}.
\]
Here the last vertical arrow is $[G:G_i]=p^i$-power map in $i$th factor. Let $(x_i) \in \prod_{i=0}^n K_1(J[G_i^{ab}])$ be a tuple satisfying M1-M3. Let $y_i$ be the image of $x_i$ in $K_1(J)$ under augmentation map i.e. the third arrow in the bottom row. Then M1 implies that $y_i = y_0^{p^i}$. It is also clear that $(x_iy_i^{-1}) \in \prod_{i=0}^n 1+I_{G_i}^{ab}$ and satisfies M1-M3. Theorem \ref{theoremk1} is now clear.  
\end{proof}

\bibliographystyle{plain}
\bibliography{mybib2}

\end{document}